\def\Z {\mathbb{Z}}
\def\R {\mathbb{R}}
\def\C {\mathbb{C}}
\def \co{\colon\!}
\def \ftnote{\let\thefootnote\relax\footnotetext}
\def\cat{{\mathop\mathrm{cat}\,}}
\def\TC{{\mathop\mathrm{TC}\,}}
\newtheorem{theorem}{Theorem}
\newtheorem{problem}[theorem]{Problem}
\newtheorem{lemma}[theorem]{Lemma}
\newtheorem{corollary}[theorem]{Corollary}
\newtheorem{proposition}[theorem]{Proposition}
\theoremstyle{remark}
\newtheorem{remark}[theorem]{Remark}
\title[]{On LS-category and topological complexity of connected sum}
\author{Alexander Dranishnikov, Rustam Sadykov}
\date{}
\begin{document}
\maketitle

\begin{abstract} The Lusternik-Schnirelmann category and topological complexity are important invariants of manifolds (and more generally, topological spaces). We study the behavior of these invariants under the operation of taking the connected sum of manifolds. We give a complete answer for the LS-categoryof orientable manifolds, $\cat(M\# N)=\max\{\cat M,\cat N\}$. For topological complexity we prove the inequality $\TC  (M\# N)\ge\max\{\TC M,\TC N\}$ for simply connected manifolds.
\end{abstract}

\section{Introduction}

The (\emph{Lusternik-Schnirelmann) category}  $\cat X$ of a topological space $X$ is the least number $n$ such that there is a covering  $\{U_i\}$ of $X$ by $n+1$ open sets $U_i$ contractible in $X$ to a point. The category has many interesting and diverse applications. It  gives an estimate for the number of critical points of a function on a manifold~\cite{LS}, \cite{Co}, it was used to solve the Poincare problem on the existence of three closed geodesics on the sphere~\cite{LS29}, and it was used in establishing the Arnold conjecture for symplectic manifolds~\cite{Ru}.  For this reason, it is desirable to know the behavior of the invariant $\cat$  under elementary topological operations. In the present note we consider the operation of taking the connected sum of manifolds. 

The Lusternik-Schnirelmann category of the connected sum $M_1\sharp M_2$ of closed connected manifolds of dimension $n$ should be compared with \[
\cat(M_1\vee M_2) =\max\{\cat M_1, \cat M_2\}.
\] 
The first author proved~\cite{Dr14b} that $\cat(M_1\sharp M_2)$ is bounded above by $\cat(M_1\vee M_2)$. The same estimate under an additional assumption that $\cat M_1$ and $\cat M_2$ are at least $3$ is established in \cite{Ne}; the estimate also follows from \cite[Theorem 1]{FHT} with the assumption  that the manifolds $M_1$ and $M_2$ are simply connected. We show that the upper bound is exact.

\begin{theorem}\label{th:1} There is an equality
$\cat(M_1\sharp M_2)=\max\{\cat M_1, \cat M_2\}$ for closed connected orientable manifolds $M_1$ and $M_2$. 
\end{theorem}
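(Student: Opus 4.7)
The upper bound $\cat(M_1\sharp M_2)\le \max\{\cat M_1, \cat M_2\}$ is already available from \cite{Dr14b}, so I focus on the reverse inequality. By symmetry it suffices to show $\cat(M_1\sharp M_2)\ge \cat M_1$. My plan is to exploit the \emph{pinch map} $p\co M_1\sharp M_2 \to M_1$ obtained by collapsing, in the $M_2$-summand, the complement of the removed disk to a point. By construction $p$ sends the fundamental class of $M_1\sharp M_2$ to $[M_1]$, so $p$ is a degree-one map between closed orientable $n$-manifolds.

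The argument will rest on two ingredients. First, a degree-one map $f\co N\to N'$ of closed orientable $n$-manifolds induces an injection $f^*$ on cohomology with arbitrary (possibly twisted) coefficients: twisted Poincar\'e duality together with the projection formula produces a cohomological Umkehr $f^!$ with $f^!\circ f^* = \id$. In particular, no nonzero class on $M_1$, in any local system, is killed by $p^*$. Second, for reasonable spaces $X$ the LS-category admits a cohomological realization: there exists a cohomology class $u\in H^*(X;A)$ with coefficients in some local system $A$ whose Fadell--Husseini--Rudyak category weight $\mathrm{wgt}(u)$ equals $\cat X$. This is the content of the work of Rudyak and Strom on category weight.

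Putting these together: I choose $u\in H^*(M_1;A)$ with $\mathrm{wgt}(u)=\cat M_1$. The first ingredient yields $p^*u\ne 0$ in $H^*(M_1\sharp M_2;p^*A)$. Since category weight is monotone under pullback --- an immediate consequence of its definition in terms of null-homotopic restrictions --- we have $\mathrm{wgt}(p^*u)\ge \mathrm{wgt}(u)=\cat M_1$. Because the category weight of any nonzero class bounds $\cat$ from below, the inequality $\cat(M_1\sharp M_2)\ge \cat M_1$ follows, and symmetrically for $M_2$.

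I expect the principal obstacle to be extracting the cohomological realization of $\cat M_1$ in precisely the form compatible with the Umkehr argument. If Strom's realization is invoked in its full generality this is automatic; an alternative, more hands-on route would replace it by a direct cup-length realization built from the Berstein--\v{S}varc class $\beta_{M_1}$, possibly multiplied by auxiliary classes to handle the nonaspherical part of the homotopy type of $M_1$. The orientability hypothesis enters the proof only through twisted Poincar\'e duality, which is what makes the degree-one injectivity of $p^*$ available with the local coefficients required by the realization step.
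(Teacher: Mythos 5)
Your first ingredient (a degree-one map between closed orientable manifolds induces a monomorphism on cohomology with arbitrary local coefficients) is exactly the paper's Lemma~\ref{l:6}, and the upper bound via \cite{Dr14b} is also what the paper uses. The gap is in your second ingredient: it is \emph{not} a theorem that $\cat X$ is always realized as the category weight of a nonzero class in ordinary cohomology with local coefficients. Rudyak and Strom prove that (strict) category weight is a \emph{lower} bound for $\cat$, is monotone under pullback, and is superadditive under cup products --- but not that the bound is sharp. Realizing $\cat X$ by a weighted class would force the non-existence of a section of the Ganea fibration $G^{k-1}_X\to X$ to be detected by the primary obstruction alone, and this fails: for Berstein--Hilton type two-cell complexes such as $S^3\cup_{[\iota,\iota]}e^6$ one has $\cat=2$ while $G^1_X\to X$ admits a stable section, so $p_1^*$ is injective on all ordinary cohomology and no class has weight $2$. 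Your fallback via powers of the Berstein--\v{S}varc class only computes $\cat$ for aspherical spaces and runs into the same realization problem otherwise.

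There is also a structural red flag: your argument uses nothing about the connected sum except the existence of a degree-one map $p\co M_1\sharp M_2\to M_1$. If it were valid it would prove Rudyak's conjecture ($\cat N\ge\cat M$ for any degree-one map of oriented manifolds) in full generality, whereas the paper explicitly states that Theorem~\ref{th:1} settles only the special case of collapse maps. The paper's proof exploits the connected-sum structure in an essential way: the $(n-1)$-skeleton of $M_1$ sits inside $M_\sharp$ and the collapse map is the identity there, so a section $s_\sharp$ of $G^k_\sharp$ transports to a section of $G^k_{M_1}$ over that skeleton. Only a \emph{single} top-dimensional obstruction $\kappa_1\in H^n(M_1;\pi_{n-1}(F))$ remains, and that one class is killed by the degree-one injectivity of Lemma~\ref{l:6} because $q^*\kappa_1$ is the obstruction to a lift that $s_\sharp$ provides. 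In other words, the paper replaces your global ``cohomological realization of $\cat$'' by a relative obstruction-theoretic argument in which only the primary (indeed, the unique remaining) obstruction ever appears. To repair your proof you would need either to supply the realization theorem (which is false in general and unknown for manifolds --- it is essentially equivalent to Rudyak's conjecture in this setting) or to switch to the skeleton-plus-top-obstruction argument.
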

Rudyak's conjecture~\cite{Ru2}
states that for a mapping of degree one $f:N\to M$ between oriented manifolds, $\cat N\ge \cat M$.
Theorem~\ref{th:1} proves Rudyak's conjecture for the collapsing maps $f:M_1\sharp M_2\to M_1$.

Given a topological space $X$, a \emph{motion planning algorithm} over an open subset $U_i\subset X\times X$ is a continuous map $U_i\to X^{[0,1]}$ that takes a pair $(x,y)$ to a path $s$ with end points $s(0)=x$ and $s(1)=y$. 
The \emph{topological complexity} $\TC(X)$ of  $X$ is the least number $n$  such that there is a covering $\{U_i\}$ of $X\times X$ by $n+1$ open sets over which there are motion planning algorithms. Topological complexity is motivated by problems in robotics, but it also has non-trivial relation to interesting problems in algebraic topology. For example, by the Farber-Tabachnikov-Yuzvinsky theorem \cite{FTY}, for $n\ne 1,3,7$, the topological complexity of $\R P^n$ is the least integer $k$ such that $\R P^n$ admits an immersion into $\R^k$. 

In the case of simply connected manifolds we give a lower bound for the topological complexity of connected sum similar to that for the category.
\begin{theorem}\label{th:0.2} For closed $r$-connected orientable manifolds $M_1$ and $M_2$, $r>0$, there is the inequality $$\TC (M_1\sharp M_2)\ge\max\{\TC M_1,\TC M_2\}.$$
\end{theorem}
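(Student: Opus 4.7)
The approach is to use the degree-one collapsing map $q\colon M \to M_1$, where $M := M_1\sharp M_2$ is obtained by identifying boundaries of disks and $q$ crushes $M_2\setminus D^n$ to a point. The proof of the corresponding inequality with $M_2$ in place of $M_1$ is symmetric, so it suffices to establish $\TC(M)\ge \TC(M_1)$. The hypothesis $r\ge 1$ ensures that $M_1$, $M_2$, and $M$ are all simply connected, so local coefficient systems play no role.

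First I would record that $q_*[M]=[M_1]$, so $q$ has degree one; Poincar\'e duality then supplies a Gysin transfer $q_!$ with $q_!\circ q^* = \mathrm{id}$, showing that $q^*\colon H^*(M_1;R)\to H^*(M;R)$ is a split injection of graded rings for every coefficient ring $R$. By the K\"unneth theorem (taken with field coefficients), the external map
$$
(q\times q)^*\colon H^*(M_1\times M_1)\longrightarrow H^*(M\times M)
$$
is also split injective. Moreover, the commutative square
$$
\begin{array}{ccc}
 M & \xrightarrow{\;\Delta_M\;} & M\times M \\
 \downarrow q & & \downarrow q\times q \\
 M_1 & \xrightarrow{\;\Delta_{M_1}\;} & M_1\times M_1
\end{array}
$$
yields the intertwining relation $\Delta_M^*\circ(q\times q)^* = q^*\circ \Delta_{M_1}^*$. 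Consequently $(q\times q)^*$ sends elements of $\ker \Delta_{M_1}^*$ (the zero-divisors of $M_1$) into $\ker \Delta_M^*$, and respects cup products. This already gives $\mathrm{zcl}(M)\ge \mathrm{zcl}(M_1)$, whence $\TC(M)\ge \mathrm{zcl}(M_1)$.

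The main obstacle is the gap between zero-divisor cup length and $\TC$ itself, since in general $\mathrm{zcl}(X)\le \TC(X)$ can be strict. The plan is to invoke the Schwarz-genus lower bound for $\TC$ in its sharpest form, namely using cohomology with arbitrary coefficient modules and the category-weight refinement of cup length. The simply-connectedness of all spaces means no local system complications arise, and the split injectivity of $q^*$ (now applied to the relevant module coefficients) ensures that a cohomology class realising $\TC(M_1)$ via this refined bound pulls back nontrivially to $M\times M$; the $r$-connectedness of $M_2\setminus D^n$ prevents the collapse from introducing cancelling relations in the relevant degrees. Once the refined obstruction is shown to transfer through $(q\times q)^*$, we conclude $\TC(M)\ge \TC(M_1)$, and the theorem follows.
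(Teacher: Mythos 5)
Your first two paragraphs are correct and coincide with the paper's starting point: the collapse $q\colon M_\sharp\to M_1$ has degree one, hence $q^*$ and $(q\times q)^*$ are injective (this is Lemma~\ref{l:6}), and zero-divisors pull back to zero-divisors. But what this rigorously yields is only $\TC(M_\sharp)\ge \mathrm{zcl}(M_1)$, and the zero-divisor cup length is in general strictly smaller than $\TC$. Your closing paragraph, which is where the theorem would actually have to be proved, rests on the premise that some ``sharpest'' cohomological lower bound (weighted cup products with arbitrary module coefficients) computes $\TC(M_1)$ exactly. No such statement is available: these refinements are still only lower bounds, so there need not exist any cohomology class whose nonvanishing pullback certifies $\TC(M_\sharp)\ge\TC(M_1)$. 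The phrases ``ensures that a class realising $\TC(M_1)$ pulls back nontrivially'' and ``the $r$-connectedness of $M_2\setminus D^n$ prevents cancelling relations'' are not arguments; the gap between $\mathrm{zcl}$ and $\TC$ is exactly what remains to be bridged, and it is not bridged.

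The paper closes this gap by working directly with the fibration $\Delta^k_{M_1}\to M_1\times M_1$ whose sectioning characterizes $\TC(M_1)\le k$ (Proposition~\ref{max}). Given a section $s_\sharp$ of $\Delta^k_\sharp$, one first obtains an honest section of $\Delta^k_{M_1}$ over $\stackrel\circ{M}_1\times\stackrel\circ{M}_1$, where $q$ is a homeomorphism, and then extends it over $M_1\times M_1$ by induction on skeleta. The obstruction at each stage is a relative class $\kappa\in H^m(M_1\times M_1,\stackrel\circ{M}_1\times\stackrel\circ{M}_1;G)$; its image $j^*(\kappa)\in H^m(M_1\times M_1;G)$ pulls back under $(q\times q)^*$ to the obstruction to lifting $q\times q$, which vanishes because $s_\sharp$ provides such a lift, and injectivity of $(q\times q)^*$ (your degree-one observation, used here in its proper place) gives $j^*(\kappa)=0$. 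The step entirely absent from your proposal is the return from $j^*(\kappa)=0$ to $\kappa=0$: this needs the injectivity of $H^m(M_1\times M_1,\stackrel\circ{M}_1\times\stackrel\circ{M}_1;G)\to H^m(M_1\times M_1;G)$, which the paper establishes via the surjectivity of restriction $H^*(M_1\times M_1)\to H^*(\stackrel\circ{M}_1\times\stackrel\circ{M}_1)$, using orientability and the K\"unneth formula (Propositions~\ref{ass} and \ref{product}, Corollary~\ref{forgetting}). Without an obstruction-theoretic (or equivalently, section-lifting) argument of this kind, the degree-one map alone does not give the theorem.
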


Again it is reasonable to compare the topological complexity of $M_1\sharp M_2$ with the number $\TC(M_1\vee M_2)$. In view of the equality $\TC(X)=\TC^M(X)$ for  complexes with $\TC(X)>\dim X$,
\cite[Theorem 2.5]{Dr14}, it follows \cite[Theorem 3.6]{Dr14},   that 
\[
   \max\{\TC M_1, \TC M_2, \cat (M_1\times M_2)\}\le  \TC(M_1\vee M_2)\le \TC(M_1)+\TC(M_2).
\]

It is natural to expect that $\TC (M_1\vee M_2)$ majorates $\TC (M_1\# M_2)$. We proved it only under some conditions.
\begin{theorem}\label{th:0.3} For closed $r$-connected orientable $n$-manifolds $M_i$  with $\TC M_i\ge\frac{n+2}{r+1}$, $i=1,2$, 
there is the inequality $$\TC (M_1\vee M_2)\ge\TC (M_1\# M_2).$$
\end{theorem}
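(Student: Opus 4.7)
The plan is to analyze the standard collapse map $q\co W\to V$, where $W:=M_1\sharp M_2$ and $V:=M_1\vee M_2$; this map sends the central $(n-1)$-sphere $S\subset W$ to the wedge point of $V$ and fits into the cofibration sequence
\[
S^{n-1}\hookrightarrow W\xrightarrow{q} V\to S^n,
\]
exhibiting $V$ as the mapping cone of the inclusion $S\hookrightarrow W$, or equivalently as $W$ with a single $n$-cell attached.

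The main technical tool will be the equality $\TC(X)=\TC^M(X)$ for complexes with $\TC(X)>\dim X$ from \cite[Theorem 2.5]{Dr14}, where $\TC^M$ denotes monoidal topological complexity. To apply it to the $n$-dimensional complexes $V$ and $W$, one needs $\TC V>n$ and $\TC W>n$; this is where the numerical hypothesis $\TC M_i\ge(n+2)/(r+1)$ is consumed. Combining the hypothesis with Theorem~\ref{th:0.2}, with the bound $\TC V\ge\cat(M_1\times M_2)$ from the introduction, and with cup-length lower bounds on $\cat(M_1\times M_2)$ stemming from the fundamental classes of the $r$-connected factors, one derives $\TC V=\TC^M V$ and $\TC W=\TC^M W$.

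The core construction transfers a monoidal motion planner from $V$ to $W$. Starting from a minimal monoidal cover $\{U_j\}_{j=0}^{\TC^M V}$ of $V\times V$ with monoidal sections $s_j\co U_j\to V^{[0,1]}$, pull back along $q\times q$ to obtain the cover $\{\tilde U_j:=(q\times q)^{-1}(U_j)\}$ of $W\times W$. Off the central sphere $S$, the map $q$ is a homeomorphism onto its image, so the sections $s_j$ admit canonical lifts $\tilde s_j$ on the complement of $S\times S$. On a neighborhood of $S\times S$, the monoidal constant-on-diagonal condition, combined with the contractibility of a tubular neighborhood of $S$ in $W$ and the $r$-connectedness of $W$, allows the lifts to be patched into continuous monoidal sections $\tilde s_j\co\tilde U_j\to W^{[0,1]}$, yielding $\TC W=\TC^M W\le\TC^M V=\TC V$.

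The hardest step will be the patching: a path in $V$ passing through the wedge point has an entire $(n-1)$-sphere of potential lifts in $W$, and continuously selecting among them compatibly with the monoidal constant-on-diagonal condition is delicate. The $r$-connectedness of $W$ is ultimately what enables this, while the dimension condition $\TC W>n$, secured by the numerical hypothesis, is what prevents the number of open sets from growing past $\TC^M V+1$.
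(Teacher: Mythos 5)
There is a genuine gap here, in fact two. First, you misidentify what the hypothesis $\TC M_i\ge\frac{n+2}{r+1}$ is for. It does not give $\TC(M_1\vee M_2)>n$ or $\TC(M_\sharp)>n$: for $r\ge 1$ the bound $\frac{n+2}{r+1}\le\frac{n+2}{2}$ is typically far below $n$, and neither Theorem~\ref{th:0.2} nor $\TC(M_\vee)\ge\cat(M_1\times M_2)$ closes that gap in general (already for $M_1=M_2=S^2\times S^2$ one only gets $\TC(M_\vee)\ge 4=\dim M_\vee$, not a strict inequality). So the tool $\TC=\TC^M$ from \cite[Theorem 2.5]{Dr14} is not available under the stated hypotheses. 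In the paper the numerical hypothesis plays an entirely different role: with $k=\TC(M_\vee)\ge\frac{n+2}{r+1}$, Propositions~\ref{p:1} and~\ref{p:2} show that the induced map of fibers $*^{k+1}\Omega M_\sharp\to *^{k+1}\Omega M_\vee$ of the Schwarz fibrations $\Delta^k_\sharp\to M_\sharp^2$ and $\Delta^k_\vee\to M_\vee^2$ is a $(k(r+1)+n-2)$-equivalence, and $k(r+1)+n-2\ge 2n=\dim M_\sharp^2$; the section of $\Delta^k_\vee$ is then pulled back and lifted across a $2n$-equivalence by elementary obstruction theory.

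Second, and more seriously, the ``patching'' step you defer is the entire content of the theorem, and the mechanism you invoke does not address it. The monoidal (constant-on-the-diagonal) condition constrains $s_j$ only at pairs $(x,x)$, whereas the lifting ambiguity occurs for every off-diagonal pair $(x,y)$ whose chosen path in $V$ passes through the wedge point --- such pairs fill out large regions of $W\times W$, not a neighborhood of the diagonal or of $S\times S$. Choosing, continuously in $(x,y)$, one lift out of the $(n-1)$-sphere's worth of ways to cross the neck is precisely a section-extension problem whose obstructions live in $H^*(W\times W;\pi_*(-))$; asserting that ``$r$-connectedness of $W$ ultimately enables this'' restates the problem rather than solving it. The fibrewise-join formalism of Section 2 exists exactly to convert this ambiguity into a computable obstruction (the connectivity of the map of fibers), which is the route the paper takes. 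To repair your argument you would essentially have to reconstruct that machinery, at which point the detour through $\TC^M$ is unnecessary.
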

For simply connected manifolds this upper bound was established by Calcines-Vanderbroucq \cite{CV}  under hypotheses that are slightly weaker than those of Thorem~\ref{th:0.3}.

\subsubsection*{Organization of the paper} In section~\ref{s:1} we prove preliminary statements that are necessary for proofs of Theorem~\ref{th:1} and \ref{th:0.2}. The three theorems are proved in sections ~\ref{s:2}, \ref{s:3}, and \ref{s:4} respectively. Finally, in section~\ref{s:5} we present several low dimensional examples.

\section{Fiberwise join product}

Recall that an element of an iterated join $X_0*X_1*\cdots*X_n$ of topological spaces is a formal linear combination $t_0x_0+\cdots +t_nx_n$ of points $x_i\in X_i$ with $\sum t_i=1$, $t_i\ge 0$, in which all terms of the form $0x_i$ are dropped. Given fibrations $f_i\co X_i\to Y$ for $i=0, ..., n$, the fiberwise join of spaces $X_0, ..., X_n$ is defined to be the space
\[
    X_0*_YX_1*_Y\cdots *_YX_n=\{\ t_0x_0+\cdots +t_nx_n\in X_0*\cdots *X_n\ |\ f_0(x_0)=\cdots =f_n(x_n)\ \}.
\]
The fiberwise join of fibrations $f_0, ..., f_n$ is the fibration 
\[
    f_0*_Y*\cdots *_Yf_n\co X_0*_YX_1*_Y\cdots *_YX_n \longrightarrow Y
\]
defined by taking a point $t_0x_0+\cdots +t_nx_n$ to $f_i(x_i)$ for any $i$. As the name `fiberwise join' suggests, the fiber of the fiberwise join of fibrations is given by the join of fibers of fibrations. 

When $X_i=X$ and $f_i=f:X\to Y$ for all $i$  the fiberwise join of spaces is denoted by $*^{n+1}_YX$ and the fiberwise join of fibrations is denoted by $*_Y^{n+1}f$. 

For a topological space $X$, we turn an inclusion of a point $*\to X$ into a fibration $G^0_X\to X$ and the diagonal inclusion $X\to X\times X$ into a fibration $\Delta^0_X$. The $n$-th Ganea space of $X$ is defined to be the space $G^n_X=*_X^{n+1}G^0_X$, while the $n$-th Ganea fibration is the fiberwise join of fibrations $G^n_X\to X$. Similarly, there is a fiberwise join $\Delta^n_X=*^{n+1}_{X\times X}\Delta^0_X$ and a fiberwise join fibration $\Delta^n_X\to X\times X$. 

The Schwarz theorem~\cite{Sch} implies that $\cat(X)\le n$ if and only if the fibration $G^n_X\to X$ admits a section. Similarly, $\TC(X)\le n$ if and only if the fibration $\Delta^n_X\to X$ admits a section.

\section{Preliminary results}\label{s:1}

Let $M_1$ and $M_2$ be two closed connected manifolds of dimension $n$.  To simplify notation, we will write $M_\vee$ and $M_\sharp$ respectively for the pointed union $M_1\vee M_2$ and the connected sum $M_1\sharp M_2$. Recall that a map of topological spaces is an \emph{$n$-equivalence} if it induces an isomorphism of homotopy groups in degrees $\le n-1$ and an epimorphism in degree $n$.

\begin{proposition}\label{p:1} The projection of the connected sum $M_\sharp$ to the pointed sum $M_\vee$ is an $(n-1)$-equivalence. 
\end{proposition}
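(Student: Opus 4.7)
The plan is to realize the projection $p\colon M_\sharp \to M_\vee$ as the quotient by the separating $(n-1)$-sphere of the connected sum, and then analyze it via a mapping-cone factorization. Writing $M_\sharp = (M_1\setminus \mathring D_1)\cup_{S^{n-1}}(M_2\setminus \mathring D_2)$, the gluing sphere $S^{n-1}\subset M_\sharp$ is bicollared and hence a cofibred subspace. Collapsing it identifies $M_\sharp/S^{n-1}$ with $(M_1\setminus\mathring D_1)/\partial\,\vee\,(M_2\setminus\mathring D_2)/\partial$, and each wedge summand $(M_i\setminus\mathring D_i)/\partial D_i$ is obtained from $M_i$ by collapsing the contractible disk $D_i$, hence is homotopy equivalent to $M_i$. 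Under this homotopy equivalence, $p$ becomes the quotient map $q\colon M_\sharp\to M_\sharp/S^{n-1}$.

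The key step is then to factor $q$ through the mapping cone $C = M_\sharp\cup_{S^{n-1}} D^n$ of the inclusion $i\colon S^{n-1}\hookrightarrow M_\sharp$. Because $i$ is a cofibration and $D^n$ is contractible, the canonical collapse $C\to M_\sharp/S^{n-1}$ is a homotopy equivalence. The inclusion $M_\sharp\hookrightarrow C$ attaches a single $n$-cell along $i$, so the relative pair $(C,M_\sharp)$ satisfies $\pi_j(C,M_\sharp)=0$ for $j\le n-1$. The long exact sequence of the pair then yields that this inclusion induces isomorphisms on $\pi_j$ for $j\le n-2$ and a surjection on $\pi_{n-1}$; that is, it is an $(n-1)$-equivalence in the paper's sense. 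Composing with the homotopy equivalence $C\simeq M_\vee$ establishes the proposition.

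The only step requiring real care is pinning down the identification $M_\sharp/S^{n-1}\simeq M_\vee$ and checking that under it the quotient map agrees with the authors' collapsing projection $p$; this is bookkeeping built into the connected-sum construction, made cleanest by taking $p$ itself to be \emph{defined} as the composite of $q$ with the homotopy equivalences $(M_i\setminus\mathring D_i)/\partial D_i\simeq M_i$. Everything after that is formal, requiring only the cofibration property of $S^{n-1}\subset M_\sharp$ and the long exact sequence of a pair; no Blakers--Massey, Hurewicz, or van Kampen input is needed, and no hypothesis on $\pi_1(M_i)$ enters.
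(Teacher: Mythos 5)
Your proof is correct, but it takes a different route from the paper's. The authors replace $M_\vee$ by the mapping cylinder of the projection, observe that $H_i(M_\vee,M_\sharp)=0$ for $i\le n-1$ and that the pair is simply connected, and then invoke the relative Hurewicz theorem to convert the homological statement into the homotopical one. You instead model $M_\vee$ directly as the mapping cone $C=M_\sharp\cup_{S^{n-1}}D^n$ of the neck sphere and use only the elementary fact that attaching a single $n$-cell produces an $(n-1)$-connected pair (compression via cellular approximation), after which the long exact sequence of $(C,M_\sharp)$ gives exactly the isomorphisms in degrees $\le n-2$ and the surjection in degree $n-1$ required by the paper's definition of an $(n-1)$-equivalence. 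What your version buys is the complete avoidance of the relative Hurewicz theorem and hence of any discussion of the $\pi_1$-action on relative homotopy groups, which is the delicate point in the authors' argument when the $M_i$ are not simply connected; what it costs is the bookkeeping you acknowledge, namely fixing the identification $M_\sharp/S^{n-1}\simeq M_\vee$ and checking that the collapsing projection corresponds to the quotient map under it. That identification is unproblematic here, and indeed the authors themselves adopt the same geometric picture at the start of Section~4 (``$M_\vee$ is obtained from $M_\sharp$ by attaching a disc along the neck''), so your model is consistent with how the proposition is used later in the paper.
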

\begin{proof}
 By replacing $M_\vee$ with the mapping cylinder of the projection of the connected sum onto the pointed union, we may assume that the projection is an inclusion.  It follows that $H_i(M_\vee, M_\sharp)=0$ for $i\le n-1$, and the pair $(M_\vee, M_\sharp)$ is simply connected. By the relative Hurewicz theorem, then, all homotopy groups of the pair in degrees $\le n-1$ are trivial.
\end{proof}

We omit the proof of the following observation as it is straightforward. 

\begin{proposition}\label{p:4} Let $f\co X\to Y$ be an $(n-1)$-equivalence of CW complexes. Then there is a CW complex $Y'$ containing $X$ with its $(n-1)$-skeleton in $X$, and a homotopy equivalence $j\co Y\to Y'$ such that $j\circ f$ is homotopic to the inlcusion $X\subset Y'$. \end{proposition}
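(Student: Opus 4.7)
The plan is to invoke the classical CW-approximation theorem for pairs: any $(n-1)$-connected CW pair is homotopy equivalent, rel the subcomplex, to a pair obtained by attaching cells only in dimensions $\ge n$. First, by cellular approximation I may assume $f$ is cellular; then form the mapping cylinder $M_f$, a CW complex containing both $X$ and $Y$ as subcomplexes, with $Y \hookrightarrow M_f$ a homotopy equivalence and $\iota_X \simeq \iota_Y\circ f$ inside $M_f$ (via the canonical deformation of $X\times\{0\}$ to $X\times\{1\}\subset Y$). The long exact sequence of the pair $(M_f, X)$, together with the hypothesis that $f$ is an $(n-1)$-equivalence, yields $\pi_i(M_f, X) = 0$ for $i \le n-1$.

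The core step is the inductive construction of $Y'$. Set $Y'_{n-1} = X$ and $h_{n-1} = \iota_X$. At stage $k \ge n$, having built $h_{k-1}:Y'_{k-1}\to M_f$ with $(M_f,Y'_{k-1})$ being $(k-1)$-connected and $Y'_{k-1}$ obtained from $X$ by attaching cells of dimension in $[n,k-1]$, attach $k$-cells to $Y'_{k-1}$ along maps realising a generating set of $\pi_k(M_f, Y'_{k-1})$ and extend $h_{k-1}$ to $h_k:Y'_k\to M_f$ using the disk halves of the chosen representatives. An elementary long-exact-sequence chase shows $(M_f, Y'_k)$ is $k$-connected. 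Taking $Y' = \bigcup_k Y'_k$ and $h = \bigcup_k h_k$, the map $h : Y' \to M_f$ is a weak equivalence, hence a homotopy equivalence by Whitehead's theorem. By construction, only cells of dimension $\ge n$ are attached to $X$, so $(Y')^{(n-1)} \subseteq X$.

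To finish, since $X \hookrightarrow Y'$ is a cofibration, the homotopy extension property lets me choose a homotopy inverse $h' : M_f \to Y'$ of $h$ that restricts to the inclusion $X\hookrightarrow Y'$. Setting $j = h' \circ \iota_Y : Y \to Y'$ gives a homotopy equivalence, and a short diagram chase using $\iota_Y \circ f \simeq \iota_X$ in $M_f$ and $h'\circ\iota_X = (X\hookrightarrow Y')$ yields $j \circ f \simeq (X \hookrightarrow Y')$, as required. The main obstacle is the inductive cell-attachment step: one must select attaching maps that realise all generators of $\pi_k(M_f, Y'_{k-1})$ while keeping the extensions $h_k$ compatible. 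This is precisely the content of the textbook CW-approximation of pairs; the remainder of the argument is formal bookkeeping.
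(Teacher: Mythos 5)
Your argument is correct: it is the standard relative CW-approximation construction (mapping cylinder, $(n{-}1)$-connectivity of $(M_f,X)$, inductive attachment of cells of dimension $\ge n$ killing relative homotopy, Whitehead's theorem, and a homotopy inverse under $X$ via the cofibration property). The paper explicitly omits the proof as a straightforward observation, and your write-up supplies precisely the textbook argument the authors had in mind.
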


We will denote the join of $k+1$ copies of a space $X$ by $*^{k+1}X$. 

For $r=0$ the following Proposition was proven in \cite[Proposition 5.7]{DKR}.
 Also a slightly weaker statement for $r\ge 1$ was proven  in~\cite{FHT}.

\begin{proposition} \label{p:2} Let $f\co X\to Y$ be an $(n-2)$-equivalence of $(r-1)$-connected pointed CW-complexes with $r\ge 0$. Then the map $*^{k+1}f$ is a $(kr+k+n-2)$-equivalence. 
\end{proposition}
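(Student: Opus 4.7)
The plan is to induct on $k$, the case $k=0$ being the hypothesis on $f$. For the inductive step I would rewrite $*^{k+1}X = (*^k X) * X$ using the standard homotopy equivalence $A * B \simeq \Sigma(A\wedge B)$ for well-pointed CW complexes, which identifies $*^{k+1}f$ with $\Sigma\bigl((*^k f)\wedge f\bigr)$. The task thus reduces to bounding the connectivity of the smash $(*^k f)\wedge f$ and then adding $1$ for the final suspension.

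The main technical input is the smash-product connectivity lemma: if $g\co A\to B$ is an $m$-equivalence of pointed CW complexes and $W$ is an $(\alpha-1)$-connected pointed CW complex, then $g\wedge\id_W$ is an $(m+\alpha)$-equivalence. I would prove this by skeletal induction on $W$: the base case $W=S^\alpha$ reduces to the iterated suspension $\Sigma^\alpha g$, while attaching higher-dimensional cells is absorbed by the gluing lemma for $n$-equivalences in homotopy pushouts. Combined with the standard join connectivity formula $\mathrm{conn}(A*B)=\mathrm{conn}(A)+\mathrm{conn}(B)+2$, a separate induction shows that $*^k Y$ is $(kr+k-2)$-connected.

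Now factor the map as
\[
(*^k X)\wedge X \xrightarrow{(*^k f)\wedge\id} (*^k Y)\wedge X \xrightarrow{\id\wedge f} (*^k Y)\wedge Y.
\]
Smashing $*^k f$, which by the inductive hypothesis is a $((k-1)(r+1)+n-2)$-equivalence, with the $(r-1)$-connected space $X$ yields a $(kr+k+n-3)$-equivalence on the first arrow; smashing $f$ (an $(n-2)$-equivalence) with the $(kr+k-2)$-connected space $*^k Y$ gives the same bound on the second arrow. The composite is therefore a $(kr+k+n-3)$-equivalence, and applying $\Sigma$ raises the bound by $1$ to deliver the desired $(kr+k+n-2)$-equivalence for $*^{k+1}f$. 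The argument goes through uniformly for all $r\ge 0$; the main technical hurdle is the smash-product lemma itself, and the case $r=0$ had previously been treated in \cite{DKR} by different means, while for $r\ge 1$ the relative Hurewicz theorem streamlines the CW-theoretic verification.
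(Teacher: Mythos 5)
Your argument is correct, but it takes a genuinely different route from the paper's. The paper first uses Proposition~\ref{p:4} to replace $f$ by a CW inclusion $X\subset Y$ in which every cell of $Y$ outside $X$ has dimension $\ge n-1$, gives the reduced iterated join the CW structure whose cells are joins $D_1*\cdots*D_{k+1}$ of cells (of dimension $k+\sum \dim D_i$), and then concludes with a single cell count: any cell of $*^{k+1}Y$ not in $*^{k+1}X$ must involve one cell of dimension $\ge n-1$ and $k$ cells of dimension $\ge r$, hence lies above the critical dimension. You instead induct on $k$ through the natural equivalence $*^{k+1}f\simeq\Sigma\bigl((*^kf)\wedge f\bigr)$, factor the smash map through $(*^kY)\wedge X$, and feed the two arrows into a smash-product connectivity lemma plus the join connectivity formula; the arithmetic checks out, giving a $(kr+k+n-3)$-equivalence before suspending and the desired $(kr+k+n-2)$-equivalence after, including the edge case $r=0$ where the smash lemma is applied with $\alpha=0$. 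The trade-off is that your route is modular and quotable, but each ingredient (the smash lemma, the gluing lemma, suspension of an $m$-equivalence, naturality of $A*B\simeq\Sigma(A\wedge B)$) is itself established by exactly the cellular replacement the paper performs once and globally; so the paper's proof is shorter and more self-contained, while yours localizes the CW-theoretic work into standard lemmas and would adapt more easily to situations where one does not want to manipulate explicit cell structures on iterated joins.
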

\begin{proof} By Proposition~\ref{p:4}, we may assume that $X$ is a CW subcomplex of $Y$ and that the $(n-2)$-skeleton of $Y$ belongs to $X$. Given a CW complex $X$, we may introduce a CW complex structure on $*^{k+1}X$ by defining the cells in the join to be the joins $D=D_1*\cdots *D_{k+1}$ of cells in $X$. We note that if $\dim D_i=d_i$, then $\dim D=k+\sum d_i$.  

Recall that the join of spaces is homotopy equivalent to the reduced join of spaces. Hence we may assume  that all joins are reduced. Then the complex $*^{k+1}Y$ does not have cells of dimension $\le kr+k+n-3$ that are not in $*^{k+1}X$. Hence, the map $*^{k+1}$ is a $(kr+k+n-2)$-equivalence. 
\end{proof}

 %Alternatively Proposition~\ref{p:2} can be proved in the case where $X$ and $Y$ are connected CW complexes by an argument  similar to that in  the original paper \cite{FHT}. 

%\begin{proof}  Recall that the join of two spaces $A*B$ is homotopy equivalent to the reduced join of $A$ and $B$, which, in its turn, is homeomorphic to the reduced suspension $\Sigma(A\wedge B)$. Therefore we may identify $*^{k+1}X$ and $*^{k+1}Y$ with $\mathbf{X}=\Sigma^k(X\wedge \cdots\wedge X)$ and $\mathbf{Y}=\Sigma^k(Y\wedge \cdots \wedge Y)$ respectively where the number of pointed factors in each space is $k+1$. 

 %By Proposition~\ref{p:4}, we may assume that $X$ is a CW subcomplex of $Y$ and the $(n-2)$-skeleton of $Y$ coincides with that of $X$. Therefore the new cells in $\mathbf{Y}$ appear in degrees $\ge k + kr+n-1$, the summand $k$ accounts for taking the suspension $\Sigma^k$, the term $kr$ corresponds to choosing $r$-cells in $k$ out of $k+1$ pointed factors $Y$, and the term $n-1$ corresponds to choosing an $(n-1)$-cell in the part $Y\setminus X$ of the remaining factor $Y$. Therefore the pair $(\mathbf{Y}, \mathbf{X})$ is $k(r+1)+n-2$ connected.      
%\end{proof}

We recall that using the orientation sheaf ${\mathcal O}_M$ one a closed manifold $M$ one can define a fundamental class $[M]$ such that the Poincare Duality
homomorphism $$PD=[M]\cap :H^k(M;\mathcal F)\to H_{n-k}(M;\mathcal F\otimes{\mathcal O}_M)$$ is an isomorphism~\cite{Bre} for any locall coefficients $\mathcal F$ on $M$.
If a map $f:M\to N$ between closed manifolds takes the orientation sheaf ${\mathcal O}_N$ to the orientation sheaf ${\mathcal O}_M$, then one can define the degree of $f$ as the integer $deg(f)$ such that $f_*([M])=deg(f)[N]$.

\begin{lemma}\label{l:6}  Suppose that $f:M\to N$ is a map of degree one between closed manifolds. Then for any local coefficients $\mathcal F$ the homomorphism
$f^*:H^k(N;\mathcal F)\to H^k(M;f^*\mathcal F)$ is a monomorphism for all $k$ and it is an isomorphism for $k=n$.
\end{lemma}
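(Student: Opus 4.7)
The approach is to construct an Umkehr (shriek) map as a left inverse to $f^*$ via Poincar\'e duality, which will yield injectivity in every degree, and then to verify independently that this Umkehr is itself an isomorphism in the top degree.

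First I would define, using the hypothesis $\mathcal{O}_M=f^*\mathcal{O}_N$ and Poincar\'e duality on both manifolds, the shriek map
\[
f_!\co H^k(M; f^*\mathcal{F}) \xrightarrow{\cap[M]} H_{n-k}(M; f^*(\mathcal{F}\otimes\mathcal{O}_N)) \xrightarrow{f_*} H_{n-k}(N; \mathcal{F}\otimes\mathcal{O}_N) \xrightarrow{PD^{-1}} H^k(N; \mathcal{F}).
\]
Next I would invoke the local-coefficient form of the projection formula, $f_*(f^*\alpha\cap x)=\alpha\cap f_*x$, applied to $x=[M]$, together with $f_*[M]=[N]$ (which encodes $\deg f=1$), to conclude that $f_!\circ f^*=\id$. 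This immediately gives the monomorphism claim in every degree.

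To upgrade to an isomorphism at $k=n$, since $f_!\circ f^*=\id$ it suffices to prove that $f_!$ itself is iso on $H^n$. Under Poincar\'e duality this reduces to the claim that $f_*\co H_0(M;f^*\mathcal{G})\to H_0(N;\mathcal{G})$ is an isomorphism for $\mathcal{G}=\mathcal{F}\otimes\mathcal{O}_N$. Identifying $H_0$ with $\pi_1$-coinvariants, the map in question is the canonical quotient $\mathcal{G}_{\pi_1(M)}\to\mathcal{G}_{\pi_1(N)}$ (with $\pi_1(M)$ acting through $f_*$), and this is an isomorphism precisely when $f_*\co\pi_1(M)\to\pi_1(N)$ is surjective. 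So the closing ingredient is the classical fact that a degree one map of closed connected manifolds is surjective on $\pi_1$: if $H:=f_*(\pi_1(M))$ had finite index $k>1$ in $\pi_1(N)$, then $f$ would lift to the corresponding $k$-fold cover $\widetilde N\to N$ and $k\mid\deg f$ would contradict $\deg f=1$; infinite index is ruled out because $[M]$ is compact and its image in $H_n$ of a noncompact cover vanishes, again contradicting $f_*[M]=[N]$.

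The main obstacle I anticipate is keeping the local-coefficient bookkeeping straight — in particular, formulating the projection formula and the naturality of Poincar\'e duality under $f$ correctly in the twisted setting, so that the computation $f_!(f^*\alpha)=\alpha$ really goes through verbatim. Once the formalism is set up, both halves of the lemma follow from the short arguments above, so the difficulty is formal rather than conceptual.
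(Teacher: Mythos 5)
Your proposal is correct and follows essentially the same route as the paper: the Umkehr map $f^!=PD^{-1}f_*PD$ together with the projection formula and $f_*[M]=[N]$ gives $f^!f^*=\id$, and the top-degree isomorphism reduces via Poincar\'e duality to the isomorphism of $\pi_1$-coinvariants on $H_0$, which holds because a degree one map is surjective on fundamental groups. The only difference is cosmetic: you spell out the covering-space proof of that $\pi_1$-surjectivity, which the paper simply cites as known.
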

\begin{proof}
This fact is well-known for orientable $M$ and $N$. The same argument works in the general case. Namely, the equality $$f^!f^*(\alpha)=PD^{-1}f_*([M]\cap f^*(\alpha))=PD^{-1}([N]\cap\alpha)=\alpha$$ for $\alpha\in H^k(N;\mathcal F)$ and $f^!:H^k(M;f^*\mathcal F)\to H^k(N;\mathcal F)$ defined as $f^!=PD^{-1}f_*PD$ implies that $f^*$ is a section of $f^!$. Thus, $f^*$ is a monomorphism. 

Let $A$ be the $\pi_1(N)$-module that corresponds to $\mathcal F$. It is treated as $\pi_1(M)$-module for (co)homology of $M$ with coefficients in $A$.
In  the commutative diagram
\[
\begin{CD}
H^n(M;A) @<f^*<< N^n(N;A)\\
@V{PD}VV @V{PD}VV\\
H_0(M;A\otimes\Z)=(A\otimes\Z)_{\pi_1(M)} @>=>> H_0(N;A\otimes\Z)=(A\otimes\Z)_{\pi_1(N)}
\end{CD}
\]
the bottom arrow is an isomorphism of the coinvariants, since  a degree one map induces an epimorphism of the fundamental groups
and the diagonal action of $\pi_1(M)$ on $A\times\Z$ factors through that of $\pi_1(N)$. Here the action of $\pi_1(N)$ on $\Z$ corresponds to the orientation sheaf.
\end{proof}

%Alternatively, in the case of simple manifolds there is another proof. 
%\begin{proof}
%Since the torsion of the $n$-th cohomology group of an oriented closed $n$-manifold $M$ is trivial, the same is true for the torsion of the $(n-1)$-st homology group. In particular, $\mathop\mathrm{Ext}(H_{n-1}M, \pi)=0$ for any $\pi$. Hence, by the universal coefficient theorem,  $H^n(M; \pi)\approx \mathop\mathrm{Hom}(H_nM; \pi)$.  Since the map $M_1\sharp M_2\to M_1$ induces an isomorphism of homology groups in degree $n$, it also induces an isomorphism of cohomology groups in the same degree with any coefficients $\pi$. 
%\end{proof}

%\begin{proposition}\label{p:100} The projection of $M_\sharp^2$ to $M_\vee ^2$ is a $(n-1)$-equivalence.  \end{proposition} \begin{proof} We may assume that the space $M_\vee$ is obtained from $M_\sharp$ by attaching a disc along the neck of the connected sum. Then the quotient space $M_\vee^2/M_\sharp^2$ has a CW structures with no cells of dimension $\le n$. Hence, by the relative Hurewicz theorem, all homotopy groups of the pair in degrees $\le n-1$ are trivial. \end{proof}

\section{Proof of Theorem~\ref{th:1}}\label{s:2}

Since a closed manifold of category $\le 1$ is homeomorphic to a sphere~\cite{CLOT}, in the rest of the argument we may assume that the categories of $M_1$ and $M_2$ are at least $2$.  

We may assume that $M_\vee$ is obtained from  $M_\sharp$ by attaching a disc along the neck of the connected sum. Hence, we may choose a  CW-structure on $M_\vee$ so that $M_1, M_2$ and $M_\sharp$ are CW subcomplexes of $M_\vee$, and the $(n-1)$-skeleton of $M_\vee$ is in $M_\sharp$. Let $j\co M_\sharp \to M_\vee$ denote the inclusion, and $p_i\co M_\vee\to M_i$ denote the retraction to the $i$-th pointed summand for $i=1,2$. The inclusion $j$ of $M_\sharp$ into $M_\vee$ induces an inclusion $j_G$ of their $k$-th Ganea fibrations $G^k_\sharp\to G^k_\vee$, which, in its turn, restricts to an inclusion $j_G'$ of fibers.

\begin{lemma}\label{eq} 
Suppose that $k\ge 2$. Then the map $j_G'$ of fibers of  the $k$-th Ganea fibrations is an $n$-equivalence. 
\end{lemma}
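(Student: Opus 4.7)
The plan is to recognize $j_G'$ as the iterated join map $*^{k+1}\Omega j$ and then to invoke Proposition~\ref{p:2}.

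First I would unpack the Ganea construction from Section~2. The fiber of the $k$-th Ganea fibration $G^k_X\to X$ is the $(k+1)$-fold join $*^{k+1}\Omega X$ of the fiber $\Omega X$ of $G^0_X\to X$. Since $G^k_X$ is the fiberwise join $*^{k+1}_X G^0_X$ and $j_G\co G^k_\sharp\to G^k_\vee$ is the fiberwise join induced by $j\co M_\sharp\to M_\vee$ on $G^0$, its restriction to fibers over a chosen basepoint is
\[
   j_G' = *^{k+1}\Omega j\co *^{k+1}\Omega M_\sharp \longrightarrow *^{k+1}\Omega M_\vee.
\]

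Second, I would verify the connectivity hypotheses needed for Proposition~\ref{p:2}. By Proposition~\ref{p:1} the inclusion $j$ is an $(n-1)$-equivalence. Using the shift $\pi_i(\Omega X)\cong\pi_{i+1}(X)$ at the basepoint (and, on other path components of $\Omega X$, translating by a loop to reduce to the basepoint component), one concludes that $\Omega j$ is an $(n-2)$-equivalence between the nonempty pointed CW complexes $\Omega M_\sharp$ and $\Omega M_\vee$, which are $(r-1)$-connected with $r=0$.

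Finally, Proposition~\ref{p:2} applied with $r=0$ gives that $*^{k+1}\Omega j$ is a $(k\cdot 0+k+n-2)=(k+n-2)$-equivalence; for $k\ge 2$ this is at least an $n$-equivalence, as claimed. The one thing to handle with care is the disconnected-loop-space situation when $\pi_1 M_i\ne 1$, but this is exactly what the weak hypothesis $r=0$ in Proposition~\ref{p:2} is designed to cover, so no additional argument is required.
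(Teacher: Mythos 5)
Your proof is correct and follows essentially the same route as the paper's: identify $j_G'$ with the join $*^{k+1}\Omega j$, note via Proposition~\ref{p:1} that $\Omega j$ is an $(n-2)$-equivalence, and apply Proposition~\ref{p:2} with $r=0$ to get a $(k+n-2)$-equivalence, which suffices for $k\ge 2$. The extra care you take with the disconnected loop spaces is a welcome clarification of a point the paper passes over silently.
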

\begin{proof}
By Proposition~\ref{p:1}, the map $\Omega M_\sharp\to \Omega M_\vee$ is an $(n-2)$-equivalence of CW complexes. Therefore, by Proposition~\ref{p:2} the map $j_G'$  is an  $(kr+k+n-2)$-equivalence, where $(r-1)$ is the connectivity of $M_1$ and $M_2$. Since  $k\ge 2$, the map $j_G'$ is an $n$-equivalence. 
\end{proof}

Recall that the $k$-th Ganea fibration $G^k_X$ over a topological space  $X$ admits a section if and only if $\cat X\le k$. 

\begin{lemma}\label{l:1.1} Suppose that the Ganea fibration $G^k_\vee$ over $M_\vee$ admits a section $s_\vee$. Then the Ganea fibration $G^k_\sharp$ over $M_\sharp$ also admits a section. In particular, $\cat M_\sharp\le \cat M_\vee$. 
\end{lemma}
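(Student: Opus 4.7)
The plan is to use the given section over $M_\vee$ to construct, via obstruction theory on a pullback, a section of the Ganea fibration over $M_\sharp$.

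First I would form the pullback $E:=j^{*}G^k_\vee$. The composition $s_\vee\circ j$ together with the universal property yield a section $\tilde s\colon M_\sharp\to E$ of the projection $E\to M_\sharp$. Functoriality of the Ganea construction produces a canonical map $\psi\colon G^k_\sharp\to E$ of fibrations over $M_\sharp$ whose restriction to fibers is exactly the map $j_G'$ of Lemma~\ref{eq}. After replacing $\psi$ by a Hurewicz fibration (without disturbing the homotopy type of $G^k_\sharp$), the genuine fiber of $\psi$ becomes weakly equivalent to the homotopy fiber of $j_G'$, and hence $(n-1)$-connected because $j_G'$ is an $n$-equivalence.

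Next, producing a section $s_\sharp$ of $G^k_\sharp\to M_\sharp$ that lifts $\tilde s$ through $\psi$ is the same as producing a section of the pulled-back fibration $\tilde s^{*}G^k_\sharp\to M_\sharp$, whose fibers are $(n-1)$-connected. Since $k\ge 2$ the fibers of both Ganea fibrations are simply connected, so classical obstruction theory for sections applies with untwisted coefficients. The obstruction to extending a partial section across the $i$-cells of $M_\sharp$ lies in $H^i(M_\sharp;\pi_{i-1}(\text{fiber}))$; these groups vanish for $i\le n$ by connectivity of the fiber and for $i>n$ because the CW structure on $M_\vee$ fixed in the setup displays $M_\sharp$ as an $n$-dimensional subcomplex. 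A section therefore exists.

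Finally, projecting such a section down to $G^k_\sharp$ gives a map $s_\sharp\colon M_\sharp\to G^k_\sharp$ satisfying $\psi\circ s_\sharp=\tilde s$ on the nose; the commutativity of the pullback square forces $p^\sharp\circ s_\sharp=\mathrm{id}_{M_\sharp}$, so $s_\sharp$ is a strict section, and $\cat M_\sharp\le\cat M_\vee$ follows by taking $k=\cat M_\vee$. The delicate point of the argument is guaranteeing that the connectivity of the fiber of $\psi$ reaches $n-1$; this is exactly the content of Lemma~\ref{eq}, which in turn depends on the hypothesis $k\ge 2$ and on Proposition~\ref{p:2}.
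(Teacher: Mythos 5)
Your proposal is correct and follows essentially the same route as the paper: form the pullback $E=j^{*}G^k_\vee$, obtain a section of $E\to M_\sharp$ from $s_\vee$, and lift it through the canonical map $G^k_\sharp\to E$, which is an $n$-equivalence by Lemma~\ref{eq}, using that $\dim M_\sharp\le n$. The paper simply cites the $n$-equivalence plus the dimension bound where you spell out the obstruction-theoretic details.
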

\begin{proof} Newton's Theorem 3.2~\cite{Ne} proves exactly that. In the pull-back diagram
\[
\begin{CD}
G^k_\sharp @>h>> E @>j'>> G^k_\vee\\
@. @Vp'VV @VpVV\\
@. M_\sharp @>j>> M_\vee\\
\end{CD}
\]
the map $h$ is an $n$-equivalence in view of Lemma~\ref{eq}. The section $s_\vee$ defines a section $s':M_\sharp\to E$.  Therefore,  since $\dim M_\#\le n$, the map $s'$ admits a lift $s''$ with respect to $h$. Then $s''$ is a section of the Ganea fibration $p_\sharp:G^k_\sharp\to M_\sharp$.
\end{proof}

\begin{lemma}\label{l:1.2} For connected closed manifolds $M_1$ and $M_2$ with $M_2$ orientable,  $$\cat M_1\sharp M_2\ge\cat M_1.$$
\end{lemma}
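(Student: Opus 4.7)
Let $c\colon M_\sharp\to M_1$ denote the collapsing map that sends the punctured copy of $M_2$ inside $M_\sharp$ to a point and is the identity elsewhere. Orientability of $M_2$ ensures that $c$ pulls the orientation sheaf of $M_1$ back to that of $M_\sharp$, so $c$ is a degree-one map in the sense of the paragraph preceding Lemma~\ref{l:6}. By that lemma, $c^*\colon H^i(M_1;\mathcal F)\to H^i(M_\sharp;c^*\mathcal F)$ is injective for every local coefficient system $\mathcal F$ and is an isomorphism in degree $n$. The plan is to use this cohomological injectivity to descend a section obtained on $M_\sharp$ to one on $M_1$.

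Set $k=\cat M_\sharp$; I aim to show that the Ganea fibration $p_1\colon G^k_1\to M_1$ admits a section. Form the pullback
$$
\begin{CD}
E @>\bar c>> G^k_1\\
@Vp'VV @Vp_1VV\\
M_\sharp @>c>> M_1,
\end{CD}
$$
and use functoriality of the Ganea construction to produce a fibrewise map $G^k_\sharp\to E$ over $M_\sharp$. A section of $G^k_\sharp\to M_\sharp$, which exists because $\cat M_\sharp\le k$, composes with this map to yield a section $\sigma$ of $p'$. Equivalently, $\bar c\circ\sigma\colon M_\sharp\to G^k_1$ is a lift of $c$ along $p_1$.

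The remaining task is to convert this lift of $c$ into a section of $p_1$ over $M_1$. For this I would run obstruction theory: the obstructions to building a section of $p_1$ skeleton by skeleton are cohomology classes in $H^i(M_1;\pi_{i-1}(F))$ with $F=*^{k+1}\Omega M_1$, and by naturality their pullbacks under $c$ are precisely the obstructions governing the lift of $c$ through $p_1$. Because the latter lift exists, each pulled-back obstruction vanishes, and Lemma~\ref{l:6} then forces the corresponding obstruction on $M_1$ itself to vanish. Hence $p_1$ admits a section and $\cat M_1\le k$.

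The main obstacle is handling higher obstructions in a way insensitive to the choices (of trivialisations of earlier ones) they involve. A clean route is to concentrate attention on the decisive obstruction in top degree $H^n(M_1;\pi_{n-1}(F))$---since $\dim M_1=n$---where Lemma~\ref{l:6} delivers an isomorphism rather than merely injectivity. Thus the $M_1$-obstruction is detected faithfully on $M_\sharp$, where it is killed by $\sigma$, and the full descent argument closes.
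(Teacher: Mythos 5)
Your overall strategy---collapse $c\colon M_\sharp\to M_1$, observe it has degree one because $M_2$ is orientable, and use the injectivity of $c^*$ from Lemma~\ref{l:6} to kill obstructions---is exactly the paper's. The gap is in the obstruction-theoretic descent. You correctly flag that higher obstructions are defined only after choices of a partial section on lower skeleta, so the claim ``each pulled-back obstruction vanishes, hence the corresponding obstruction on $M_1$ vanishes'' is only available for the primary obstruction: beyond that, the class whose pullback is killed by the lift $\bar c\circ\sigma$ need not be the class attached to whatever partial section you have built on $M_1$. Your proposed remedy---concentrating on the top-degree obstruction in $H^n(M_1;\pi_{n-1}(F))$, where $c^*$ is even an isomorphism---does not resolve this, because that class is itself a higher obstruction: it is defined only once a section of $G^k_1$ over the $(n-1)$-skeleton of $M_1$ has been produced, and nothing in your argument produces one.

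The missing ingredient is geometric rather than homological. Choose the CW structure so that the $(n-1)$-skeleton of $M_1$ lies in $\overset{\circ}{M}_1=M_1\setminus \mathrm{Int}\,D\subset M_\sharp$, where $c$ restricts to the identity. Then $c_G\circ s_\sharp$ (the Ganea-level pushforward of the given section of $G^k_\sharp$) restricts to an honest section of $G^k_1$ over that skeleton, with no choices and no lower obstructions to discuss. Only the top cell of $M_1$ remains, so there is a single well-defined obstruction $\kappa_1\in H^n(M_1;\pi_{n-1}(*^{k+1}\Omega M_1))$, with local coefficients---which is precisely why Lemma~\ref{l:6} is stated for local systems, $M_1$ not being assumed simply connected. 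By naturality $c^*\kappa_1$ is the obstruction to lifting $c$ through $G^k_1\to M_1$; it vanishes because $s_\sharp$ supplies such a lift, and injectivity of $c^*$ gives $\kappa_1=0$. With this one observation inserted, your argument closes and coincides with the paper's proof.
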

\begin{proof}  Suppose $\cat M_\sharp =k$. Then $G^k_\sharp$  admits a section $s_\sharp$. We show that $G^k_1=G^k_{M_1}$ admits a section.

We will identify the $(n-1)$-skeleton of $M_1$ with a subspace of $M_\sharp$.  
The projection $q\co M_\sharp \to M_1$ gives rise to a map of Ganea fibrations $q_G\co G^k_\sharp\to G^k_1$. In particular, $q_G\circ s_\sharp$ defines a section of $G^k_1$ over the $(n-1)$-skeleton of $M_1$. Let $\kappa_1$ be the obstruction to extending this section over $M_1$. 
Then $q^*\kappa_1$ is the obstruction to a lift of $q$ with respect to the Ganea fibration $p:G^k_1\to M_1$. The section $s_\sharp$ defines such a lift.
Hence $q^*\kappa_1=0$. Since $M_2$ is orientable, $q$ is a map of degree one. By  Lemma~\ref{l:6}, $\kappa_1=0$.  
\end{proof}

Lemma~\ref{l:1.2} implies that if both $M_1$ and $M_2$ are orientable, then $\cat M_\sharp$ is bounded below by $\cat M_1$ and $\cat M_2$. Thus, Lemma~\ref{l:1.2} completes the proof of Theorem~\ref{th:1}. Here is  a refinement of Theorem~\ref{th:1}.
\begin{proposition}\label{ref}
Suppose that one of the closed connected manifolds $M_1$ and $M_2$ is orientable. Then $$\cat M_1\sharp M_2=\max\{\cat M_1,\cat M_2\}.$$
\end{proposition}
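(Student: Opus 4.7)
The plan is to reduce to Theorem~\ref{th:1} by handling only the mixed-orientability case. By the symmetric roles of the two summands in $M_1\sharp M_2$, we may assume $M_2$ is orientable; if $M_1$ is orientable as well, the equality is exactly Theorem~\ref{th:1}, so we henceforth assume $M_1$ is non-orientable.

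The key external input I would invoke is the classical theorem that every closed connected non-orientable $n$-manifold has LS-category equal to its dimension $n$ (this can be derived from a Berstein--Schwarz cup-length argument in local coefficients twisted by the orientation sheaf, using Poincar\'e duality in that local system, exactly in the spirit of Lemma~\ref{l:6}). Applied to $M_1$, this yields $\cat M_1 = n$. Since $M_2$ is orientable, the hypothesis of Lemma~\ref{l:1.2} is satisfied, so $\cat M_\sharp \ge \cat M_1 = n$; combined with the trivial bound $\cat M_\sharp \le \dim M_\sharp = n$, this forces $\cat M_\sharp = n$. As $\cat M_2 \le \dim M_2 = n$, we conclude $\max\{\cat M_1,\cat M_2\} = n = \cat M_\sharp$, which is the desired equality. (Alternatively, one can confirm the upper bound independently from Lemma~\ref{l:1.1} together with the known identity $\cat M_\vee = \max\{\cat M_1,\cat M_2\}$, but the dimensional bound suffices.)

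The main obstacle is precisely this reliance on the external theorem that $\cat M = n$ for a closed non-orientable $n$-manifold $M$; everything else is a short combination of Lemma~\ref{l:1.2} with the dimensional bound $\cat M \le \dim M$ and needs no new machinery beyond what was already developed for Theorem~\ref{th:1}. Without a citation ready to hand, the proof would need to incorporate a direct cup-length calculation showing that the $n$-th power of the Berstein--Schwarz class of $M_1$, evaluated in the appropriate twisted coefficient system, is nonzero. This is the only genuine new ingredient required to move from the orientable setting of Theorem~\ref{th:1} to the mixed-orientability setting of Proposition~\ref{ref}.
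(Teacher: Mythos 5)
The key external input you invoke is false: it is not true that every closed connected non-orientable $n$-manifold has LS-category equal to $n$. A counterexample is the twisted (non-orientable) $S^2$-bundle over $S^1$, i.e.\ the mapping torus of the antipodal map of $S^2$: this is a closed non-orientable $3$-manifold with $\pi_1\cong\Z$ whose category is $2$, not $3$. Indeed, by Berstein's theorem a closed $n$-manifold ($n\ge 3$) has category $n$ if and only if the $n$-th power of its Berstein--Schwarz class is nonzero; that power is pulled back from $H^n(\pi_1(M);I(\pi_1 M)^{\otimes n})$, which vanishes whenever $\pi_1(M)$ has cohomological dimension less than $n$ (here $\Z$ has cohomological dimension $1$). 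The same conclusion follows from the computation of \cite{GG}: a closed $3$-manifold with nontrivial free fundamental group has category $2$. Non-orientability gives no lower bound on the cohomological dimension of $\pi_1$, so the ``direct cup-length calculation'' you propose as a fallback cannot succeed either. With this lemma gone, your argument only yields $\cat M_\sharp\ge\cat M_1$, which is exactly Lemma~\ref{l:1.2}, and it says nothing in the genuinely open case $\cat M_2>\cat M_1$.

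The paper handles that remaining case by a different device, which you would need to supply: pass to the orientation double cover $\mu:\tilde M_1\to M_1$ and the induced double cover $\tilde M_1\sharp(M_2\sharp M_2)\to M_1\sharp M_2$. Since category does not increase when passing to a covering space, and since $\tilde M_1$ and $M_2\sharp M_2$ are orientable so that Theorem~\ref{th:1} applies to their connected sum, one gets
$$\cat(M_1\sharp M_2)\ \ge\ \cat\bigl(\tilde M_1\sharp(M_2\sharp M_2)\bigr)\ =\ \max\{\cat\tilde M_1,\ \cat(M_2\sharp M_2)\}\ =\ \cat M_2,$$
using $\cat\tilde M_1\le\cat M_1<\cat M_2$ and $\cat(M_2\sharp M_2)=\cat M_2$. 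Combined with Lemma~\ref{l:1.2} and the known upper bound $\cat M_\sharp\le\max\{\cat M_1,\cat M_2\}$, this gives the proposition. So the overall strategy (reduce to Theorem~\ref{th:1} plus Lemma~\ref{l:1.2}) is right, but the bridge you chose is broken and must be replaced by the covering-space argument.
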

\begin{proof} Without loss of generality we may assume that $M_2$ is orientable.
We need to prove the inequality $\cat M_1\sharp M_2\ge\max\{\cat M_1,\cat M_2\}$ when $M_1$ is not orientable. In view of Lemma~\ref{l:1.2} it suffices to consider the case when $\cat
M_2>\cat M_1$. By the homotopy lifting property for any covering map $f:X\to Y$, $\cat X\le\cat Y$.
We consider the 2-fold covering $\mu:\tilde M_1\to M_1$ and the induced
covering $\nu: \tilde M_1\sharp(M_2\sharp M_2)\to M_1\sharp M_2$. Then $$\cat M_1\sharp M_2\ge\cat\tilde M_1\sharp(M_2\sharp M_2)=\max\{\cat\tilde M_1,\cat(M_2\sharp M_2)\}=\cat M_2$$ by the covering inequality, the connected sum equality (Theorem~\ref{th:1}) for orientable manifolds, and the assumption $\cat
M_2>\cat M_1$.
\end{proof}
\begin{problem}
Does the inequality $\cat M_1\sharp M_2\ge\max\{\cat M_1,\cat M_2\}$ hold true where both manifolds are  nonorientable ?
\end{problem}
Using the idea of the proof of Proposition~\ref{ref} one can reduce this problem to the case $M_1=M_2$.

\section{Proof of Theorem~\ref{th:0.2}}\label{s:3}

\begin{proposition}\label{ass}
Let $D$ be a closed ball in a closed connected orientable $n$-manifold $M$ and let $\overset\circ M$ denote $M\setminus Int D$.
Then the inclusion homomorphism $$j^*:H^k(M;G)\to H^k(\stackrel\circ{M};G)$$ is an isomorphism for $k\le n-1$
for any coefficient group $G$.
\end{proposition}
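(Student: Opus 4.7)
The plan is to run the long exact cohomology sequence of the pair $(M,\overset\circ M)$ with coefficients in $G$. By excising the interior of $\overset\circ M\setminus\partial D$, the relative group $H^k(M,\overset\circ M;G)$ is identified with $H^k(D,\partial D;G)\cong H^k(D^n,S^{n-1};G)$, which vanishes for $k\ne n$ and equals $G$ for $k=n$.

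Feeding this into the long exact sequence
$$\cdots\to H^k(M,\overset\circ M;G)\to H^k(M;G)\stackrel{j^*}{\to} H^k(\overset\circ M;G)\to H^{k+1}(M,\overset\circ M;G)\to\cdots$$
immediately gives that $j^*$ is an isomorphism whenever $k\le n-2$, since the two flanking relative groups are zero.

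The only delicate case is $k=n-1$. Here the sequence reduces to
$$0\to H^{n-1}(M;G)\stackrel{j^*}{\to} H^{n-1}(\overset\circ M;G)\stackrel{\delta}{\to} G\stackrel{\psi}{\to} H^n(M;G),$$
and it suffices to show that the connecting homomorphism $\delta$ is zero, or equivalently that $\psi$ is injective. I would identify the quotient $M/\overset\circ M$ with $D/\partial D\cong S^n$; then $\psi$ is induced by the collapse map $q\co M\to S^n$, which has degree one because it is a homeomorphism on the interior of $D$ and constant outside. Lemma~\ref{l:6}, applied to the degree-one map $q$ between closed orientable manifolds (with trivial local coefficients), then yields that $q^*=\psi$ is injective, which finishes the argument.

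The one point that really needs care is the borderline degree $k=n-1$; everything else is a routine consequence of excision and the long exact sequence. The main role in the proof is played by Lemma~\ref{l:6}, which is exactly the tool needed to conclude that the collapse to $S^n$ is injective on top-dimensional cohomology.
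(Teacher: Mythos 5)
Your proof is correct and rests on the same two pillars as the paper's: the long exact sequence of the pair $(M,\stackrel{\circ}{M})$ with the excision identification $H^k(M,\stackrel{\circ}{M};G)\cong H^k(D,\partial D;G)$, and the fact that the degree-one collapse map $q\colon M\to M/\stackrel{\circ}{M}\cong S^n$ is injective on $H^n$ with coefficients in $G$ --- which is exactly where orientability enters. The difference is in how the borderline case $k=n-1$ is closed. You show directly that the next map $\psi\colon H^n(M,\stackrel{\circ}{M};G)\to H^n(M;G)$ in the same exact sequence is injective, by identifying it with $q^*$ and invoking Lemma~\ref{l:6} for the degree-one map $q$; since exactness gives $\mathrm{im}\,\delta=\ker\psi$, this kills $\delta$ at once. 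The paper instead factors $\delta$ through the restriction $i^*\colon H^{n-1}(\stackrel{\circ}{M};G)\to H^{n-1}(\partial D;G)$ and proves $i^*=0$ from a second exact sequence, whose connecting map is the suspension isomorphism followed by the same $q^*$. Your shortcut is a genuine (if modest) simplification, and replacing the paper's bare assertion that ``$q^*$ is an isomorphism due to orientability'' with an explicit appeal to Lemma~\ref{l:6} (applied with constant coefficients to the orientable closed manifolds $M$ and $S^n$) is a legitimate and arguably tidier justification.
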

\begin{proof}
The exact sequence of  a pair 
$$
H^{k+1}(S^n;G) \stackrel\delta\leftarrow H^k(\stackrel\circ{M};G) \xleftarrow{j^*} H^k(M;G) \leftarrow H^k(S^n;G)
$$
immediately implies that $j^*$ is an isomorphism for $k<n$. For $k=n-1$ it suffices to show that $\delta$ is trivial. It follows from the fact
 $i^*=0$ in the following commutative diagram
$$
\begin{CD}
H^n(M,\stackrel\circ{M};G) @<\delta<< H^{n-1}(\stackrel\circ{M};G) \\
@V\cong VV @Vi^*VV\\
H^n(D^n,\partial D^n;G) @<\cong<< H^{n-1}(\partial D^n;G).\\
\end{CD}
$$
The triviality of $i^*$ follows from the exact sequence of a pair
$$
\to H^{n-1}(\stackrel\circ{M};G) \stackrel{i^*}\rightarrow H^{n-1}(\partial D^n;G)\stackrel{\delta'}\rightarrow H^n(M;G)\to .
$$
 We note that $\delta'$ as the connecting homomorphism in a Puppe exact sequence is a the composition 
$$
H^{n-1}(\partial D^n;G)\stackrel\Sigma\to H^{n}(S^n;G)\stackrel{q^*}\to H^n(M;G)
$$
of the suspension isomorphism and the homomorphism induced by the quotient map $q:M\to M/\stackrel\circ{M}$.
The homomorphism $q^*$ is an isomorphism due to orientability of $M$. Thus $\delta'$ is an isomorphism and hence $i^*$ is trivial.
\end{proof}
\begin{remark} The above proposition does not hold true for local coefficients. Let $\underline\Z$ be $\Z_2$-module obtained from the integers by the involution 
$1\to -1$. Let $M=\R P^3$, then $\stackrel\circ{M}=\R P^2$. The inclusion homomorphism
$H^2(\R P^3;\underline\Z)\to H^2(\R P^2;\underline\Z)$ is not an isomorphism since by the Poincare duality $H^2(\R P^3;\underline\Z)=H_1(\R P^3;\underline\Z)=H_1(\Z_2,\underline \Z)=0$ (see~\cite[Proposition 2.5.1]{Dr}) and $H^2(\R P^2;\underline\Z)=H_0(\R P^2;\underline\Z\otimes\underline \Z)
=H_0(\R P^2;\Z)=\Z$. 
\end{remark}

\begin{proposition}\label{product}
For any finite complex $X$ and a closed connected orientable manifold $M$ the inclusion homomorphism
$$
\nu:H^m(X\times M;G) \to H^m(X\times\stackrel\circ{M};G)
$$
is surjective for all $m$ and any coefficient group $G$
\end{proposition}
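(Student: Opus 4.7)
The plan is to analyze the cohomology long exact sequence of the pair $(X\times M,\, X\times \overset\circ{M})$. First, excision gives
$$H^m(X\times M,\, X\times\overset\circ{M};G)\cong H^m(X\times D,\, X\times\partial D;G),$$
and the quotient identification $(X\times D)/(X\times\partial D)\cong X_+\wedge(D/\partial D)=\Sigma^n X_+$ simplifies this further to $H^{m-n}(X;G)$ (equivalently, this is the Thom isomorphism for the trivial $n$-disk bundle over $X$). Under this identification, surjectivity of $\nu$ in degree $m$ is equivalent to injectivity of the connecting map
$$\gamma\colon H^{m+1-n}(X;G)\longrightarrow H^{m+1}(X\times M;G)$$
in every degree.

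The next step is to identify $\gamma$ explicitly. By naturality of excision, of the quotient map, and of cup product with Thom/suspension classes, $\gamma$ is the external product $a\mapsto a\times u$, where $u\in H^n(M;\Z)$ is the generator of $H^n(M;\Z)=\Z$ satisfying $\langle u,[M]\rangle=1$; such $u$ exists precisely because $M$ is closed, connected and orientable. Equivalently, $\gamma$ is pullback along the quotient $q\colon X\times M\to\Sigma^n X_+$, which factors as $X\times M\xrightarrow{\id\times c}X\times S^n\to\Sigma^n X_+$, where $c\colon M\to S^n$ is the Pontryagin--Thom collapse, a degree-one map because $M$ is orientable.

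Injectivity of $\gamma$ is then immediate from the slant product with the fundamental class,
$$\_\,/\,[M]\colon H^m(X\times M;G)\longrightarrow H^{m-n}(X;G).$$
The standard formula $(a\times u)/[M]=a\cdot\langle u,[M]\rangle=a$ exhibits $\_\,/\,[M]$ as a left inverse to $\gamma$. Hence $\gamma$ is injective in every degree, the connecting map $\delta$ vanishes, and $\nu$ is surjective.

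The main point requiring care is the concrete identification of $\gamma$ with external product by $u$; this comes from naturality of the Thom isomorphism for the trivial $n$-disk bundle $X\times D\to X$ applied to the inclusion of pairs $(X\times D, X\times\partial D)\hookrightarrow(X\times M, X\times\overset\circ{M})$. Once that is in hand, the slant-product observation closes the argument in one line, and orientability of $M$ enters exactly through the existence of $[M]$ and the degree-one property of $c$.
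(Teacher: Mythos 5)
Your argument is correct, but it is a genuinely different proof from the one in the paper. The paper never looks at the pair $(X\times M,\,X\times\overset\circ{M})$ directly: it first proves (Proposition~\ref{ass}) that $H^k(M;G)\to H^k(\overset\circ{M};G)$ is an isomorphism for $k\le n-1$ via the exact sequence of $(M,\overset\circ{M})$ and orientability, and then deduces surjectivity of $\nu$ by mapping the K\"unneth short exact sequence for $X\times M$ to the one for $X\times\overset\circ{M}$ and applying the Five Lemma (this is where finiteness of $X$ is used, to have the K\"unneth sequence with integral $H^*(X)$ in the first factor). You instead work with the long exact sequence of the product pair itself, identify $H^{*}(X\times M,X\times\overset\circ{M};G)\cong H^{*-n}(X;G)$ by excision and the suspension/Thom isomorphism, recognize the relative-to-absolute map as $a\mapsto a\times u$ with $u$ the cohomology fundamental class, and split it by the slant product with $[M]$. (One terminological slip: the map $\gamma$ you need to be injective is the forgetful map $j^*$ from relative to absolute cohomology, not the connecting homomorphism $\delta$; your reduction is nonetheless correct, since $\ker j^*=\operatorname{im}\delta$.) Your route buys a cleaner statement of where orientability enters (existence of $u$ with $\langle u,[M]\rangle=1$, i.e.\ the degree-one collapse $M\to S^n$), produces an explicit splitting rather than mere surjectivity, and does not actually need $X$ to be finite; the paper's route is more pedestrian but reuses Proposition~\ref{ass}, which it needs anyway, and avoids the careful identification of $\gamma$ with the external product, which is the one step in your argument that genuinely requires justification (naturality of the Thom class under the inclusion of pairs), as you correctly flag.
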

\begin{proof}
By the Kunneth formula we obtain the commutative diagram
\[
\begin{CD}
\underset{k+l=m}\bigoplus H^k(X)\otimes H^l(M;G) @>>> H^m(X\times M;G) @>>>\underset{k+l=m+1}\bigoplus Tor(H^k(X),H^l(M;G))\\
@V\oplus(1\otimes j^*)VV @V\nu VV @V\oplus(1\ast j^*)VV\\
\underset{k+l=m}\bigoplus H^k(X)\otimes H^l(\stackrel\circ M;G) @>>> H^m(X\times\stackrel\circ M;G) @>>> \underset{k+l=m+1}\bigoplus Tor(H^k(X),H^l(\stackrel\circ M;G))\\
\end{CD}
\]
By Proposition~\ref{ass}, $j^*$ is either an isomorphism or  a homomorphism  with zero range. Therefore each of the homomorphisms $1\otimes j^*$ and $1\ast j^*$ is either an isomorphism or has zero range. This implies that both $\oplus(1\otimes j^*)$ and $\oplus(1\ast j^*)$ are surjective.
The Five Lemma implies that $\nu$ is surjective.
\end{proof}

\begin{corollary}\label{cor}
The inclusion homomorphism $H^m(M_1\times M_2;G)\to H^m(\stackrel\circ{M}_1\times\stackrel\circ{M}_2;G)$ is surjective for all $m$
and any coefficient group $G$.
\end{corollary}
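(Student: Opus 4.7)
The plan is to derive the corollary by iterating Proposition~\ref{product}, factoring the inclusion $\stackrel\circ M_1\times\stackrel\circ M_2\hookrightarrow M_1\times M_2$ through an intermediate product space. The key observation that makes this work is that each $\stackrel\circ M_i$ is a compact manifold with boundary, hence a finite CW complex, so it is admissible as the ``$X$'' factor in Proposition~\ref{product}; and each $M_i$ is a closed connected orientable manifold, so it is admissible as the ``$M$'' factor.

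First, I would apply Proposition~\ref{product} with $X=\stackrel\circ M_1$ and $M=M_2$. This yields the surjectivity of
$$
H^m(\stackrel\circ M_1\times M_2;G)\longrightarrow H^m(\stackrel\circ M_1\times\stackrel\circ M_2;G).
$$
Next, I would apply Proposition~\ref{product} with $X=M_2$ and $M=M_1$. Composing with the homeomorphism $M_2\times M_1\cong M_1\times M_2$ that swaps the factors (and the induced homeomorphism $M_2\times\stackrel\circ M_1\cong\stackrel\circ M_1\times M_2$), I obtain the surjectivity of
$$
H^m(M_1\times M_2;G)\longrightarrow H^m(\stackrel\circ M_1\times M_2;G).
$$

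Finally, the inclusion $\stackrel\circ M_1\times\stackrel\circ M_2\hookrightarrow M_1\times M_2$ factors as the composite $\stackrel\circ M_1\times\stackrel\circ M_2\hookrightarrow\stackrel\circ M_1\times M_2\hookrightarrow M_1\times M_2$, so on cohomology the induced map is the composition of the two surjections obtained above. A composition of surjective homomorphisms is surjective, which gives the corollary. There is essentially no obstacle here; all the substance has already been compressed into Proposition~\ref{product}, and the corollary is a formal two-step iteration that handles the two factors one at a time.
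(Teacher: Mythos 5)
Your proposal is correct and is essentially the paper's own argument: the paper also applies Proposition~\ref{product} twice, once per factor, only routing through the intermediate space $M_1\times\stackrel\circ{M}_2$ rather than $\stackrel\circ{M}_1\times M_2$, which is an immaterial difference. Your explicit remark that $\stackrel\circ{M}_i$ is a finite complex (so it qualifies as the $X$ in Proposition~\ref{product}) is a point the paper leaves tacit.
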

\begin{proof}
We apply Proposition~\ref{product} twice
$$
H^m(M_1\times M_2;G)\to H^m({M}_1\times\stackrel\circ{M}_2;G)\to H^m(\stackrel\circ{M}_1\times\stackrel\circ{M}_2;G) .
$$
\end{proof}
\begin{corollary}\label{forgetting}
The forgetting homomorphism $$j^*:H^m(M_1\times M_2,\stackrel{\circ}{M}_1\times\stackrel{\circ}{M}_2 ;G)\to H^m(M_1\times M_2;G)$$ is injective for all $m$, any closed connected orientable $n$-manifolds $M_1$ and $M_2$, and any coefficient group $G$.
\end{corollary}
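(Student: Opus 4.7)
The plan is to derive the injectivity directly from Corollary~\ref{cor} by invoking the long exact cohomology sequence of the pair $(M_1\times M_2, \stackrel{\circ}{M}_1\times\stackrel{\circ}{M}_2)$. Indeed, writing out the relevant portion of that sequence with coefficients in $G$,
\[
 H^{m-1}(M_1\times M_2;G) \xrightarrow{i^*} H^{m-1}(\stackrel{\circ}{M}_1\times\stackrel{\circ}{M}_2;G) \xrightarrow{\delta} H^m(M_1\times M_2,\stackrel{\circ}{M}_1\times\stackrel{\circ}{M}_2;G) \xrightarrow{j^*} H^m(M_1\times M_2;G),
\]
the kernel of the forgetting homomorphism $j^*$ is the image of the connecting map $\delta$. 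So it suffices to prove that $\delta=0$, or equivalently (by exactness) that the restriction $i^*$ is surjective.

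The second step is to observe that this surjectivity is precisely the content of Corollary~\ref{cor}, applied in degree $m-1$ rather than $m$. Since Corollary~\ref{cor} asserts surjectivity of the restriction map in all degrees and with arbitrary coefficient group, it covers the degree we need. This forces $\delta=0$ and therefore $j^*$ is injective.

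There is essentially no obstacle here beyond assembling the exact sequence and quoting the previously established Corollary~\ref{cor}; the corollary is stated exactly in the form needed. The only point worth flagging is that the argument uses the full strength of Corollary~\ref{cor} (all degrees, all coefficients), which in turn relied on the orientability of both $M_1$ and $M_2$ through Proposition~\ref{ass}; the remark following Proposition~\ref{ass} shows this hypothesis is genuinely needed, so the corollary cannot be weakened to the nonorientable setting.
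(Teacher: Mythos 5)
Your argument is correct and is exactly the paper's proof: the long exact sequence of the pair $(M_1\times M_2,\stackrel{\circ}{M}_1\times\stackrel{\circ}{M}_2)$ combined with the surjectivity of the restriction map from Corollary~\ref{cor} (in degree $m-1$), which kills the connecting homomorphism. You have merely spelled out the details the paper leaves implicit.
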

\begin{proof}
The result follows from the exact sequence of the pair $(M_1\times M_2,\stackrel{\circ}{M}_1\times\stackrel{\circ}{M}_2)$ and Corollary~\ref{cor}. 
\end{proof}

\begin{proposition}\label{max}
For any simply connected closed orientable  $n$-manifold $M_1$ and any closed connected manifold $M_2$, $$ \TC(M_1\sharp M_2)\ge \TC(M_1).$$
\end{proposition}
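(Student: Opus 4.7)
The strategy is to adapt the proof of Lemma~\ref{l:1.2} to $\TC$, exchanging the Ganea fibrations $G^k$ for the fiberwise join fibrations $\Delta^k$. Suppose $\TC(M_\sharp)=k$ and choose a section $s_\sharp$ of $\Delta^k_{M_\sharp}\to M_\sharp\times M_\sharp$; the aim is to construct a section of $\Delta^k_{M_1}\to M_1\times M_1$. The collapse map $q\co M_\sharp\to M_1$ (crushing $M_2$ to a point) induces a fiberwise map $\tilde q\co\Delta^k_{M_\sharp}\to\Delta^k_{M_1}$ covering $q\times q\co M_\sharp\times M_\sharp\to M_1\times M_1$, and the composite $\tilde q\circ s_\sharp$ is a global lift of $q\times q$ to $\Delta^k_{M_1}$.

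Since $q$ restricts to the identity on $\overset\circ{M}_1\subset M_\sharp$, the restriction of $\tilde q\circ s_\sharp$ to $\overset\circ{M}_1\times\overset\circ{M}_1\subset M_\sharp\times M_\sharp$ is a bona fide section of $\Delta^k_{M_1}$ over the open set $\overset\circ{M}_1\times\overset\circ{M}_1\subset M_1\times M_1$. I plan to extend this partial section to all of $M_1\times M_1$ by obstruction theory on the pair $(M_1\times M_1,\overset\circ{M}_1\times\overset\circ{M}_1)$. Simple connectivity of $M_1$ makes $M_1\times M_1$ simply connected, so the local systems in play are simple; writing $F=*^{k+1}\Omega M_1$ for the fiber, the successive relative obstructions lie in $H^m(M_1\times M_1,\overset\circ{M}_1\times\overset\circ{M}_1;\pi_{m-1}F)$, and by Corollary~\ref{forgetting} each such group injects via the forgetful map $j^*$ into the absolute group $H^m(M_1\times M_1;\pi_{m-1}F)$.

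To see the absolute obstructions themselves vanish, I use that they pull back under $(q\times q)^*$ to obstructions for extending the given global lift $\tilde q\circ s_\sharp$, which vanish because the lift already exists everywhere. When $M_2$, and hence $M_\sharp$, is orientable, $q\times q$ is a degree-one map between closed orientable $2n$-manifolds, so Lemma~\ref{l:6} guarantees that $(q\times q)^*$ is a monomorphism on cohomology with arbitrary coefficients. Combined with the injectivity of $j^*$ from Corollary~\ref{forgetting}, this forces the relative obstructions to vanish, the section extends inductively over $M_1\times M_1$, and $\TC(M_1)\le k$. The case of non-orientable $M_2$ is reduced to the orientable setting by passing to the orientation double cover of $M_\sharp$ and applying a covering argument in the spirit of Proposition~\ref{ref}.

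The main obstacle I anticipate is the obstruction-theoretic bookkeeping: making it precise at every inductive stage that $j^*$ of the relative obstruction coincides with the absolute obstruction whose $(q\times q)^*$-pullback is killed by $\tilde q\circ s_\sharp$, in a way compatible with the extension choices made on lower skeleta. Beyond the primary obstruction this requires some care, since secondary obstructions depend on the earlier extensions, and one must ensure that the pullback bookkeeping on $M_\sharp\times M_\sharp$ remains consistent with the developing section on $M_1\times M_1$.
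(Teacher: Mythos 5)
Your treatment of the orientable case is essentially the paper's own proof: the collapse map $f\co M_\sharp\to M_1$ that is the identity on $\overset\circ{M}_1$, the induced map of the fibrations $\Delta^k$, the resulting genuine section over $\overset\circ{M}_1\times\overset\circ{M}_1$, and the inductive extension over the skeleta of $M_1\times M_1$ in which the relative obstruction in $H^m(M_1\times M_1,\overset\circ{M}_1\times\overset\circ{M}_1;\pi_{m-1}F)$ is killed by combining the injectivity of the forgetful map (Corollary~\ref{forgetting}) with naturality and the injectivity of $(f\times f)^*$ for the degree-one map $f\times f$ (Lemma~\ref{l:6}). The bookkeeping issue you flag at the end is resolved exactly as in the paper, by interpreting $j^*(\kappa)$ as the obstruction to a section over the $m$-skeleton \emph{with the freedom to modify the section on the $(m-1)$-skeleton}, which is what makes the comparison with the lift $\bar f\circ s_\sharp$ legitimate at every stage.

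The genuine gap is your reduction of the non-orientable case to the orientable one via the orientation double cover ``in the spirit of Proposition~\ref{ref}.'' That reduction needs the inequality $\TC(\widetilde X)\le\TC(X)$ for a covering $\widetilde X\to X$, and unlike the corresponding inequality for $\cat$ (which Proposition~\ref{ref} uses and which follows from the homotopy lifting property), this is not available: if you lift the path $s(x,y)$ produced by a motion planner on $U\subset X\times X$ starting at $\widetilde x$, the lift ends at $g\widetilde y$ for some deck transformation $g$ rather than at $\widetilde y$, so a motion planner downstairs does not induce one upstairs. No covering inequality of this form is proved in the paper or in \cite{Dr14}, so this step is unsupported as written (the rest of your reduction would be fine, since the orientation cover of $M_\sharp$ is $M_1\sharp(\widetilde M_2\sharp M_1)$ and the orientable case would apply to it). The paper instead handles arbitrary closed connected $M_2$ in one stroke by applying Lemma~\ref{l:6} directly to $f\times f\co M_\sharp\times M_\sharp\to M_1\times M_1$, relying on the orientation-sheaf formulation of degree and Poincar\'e duality built into that lemma; since $M_1$ is simply connected the obstruction coefficients are constant, and no passage to a cover is needed. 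To close your argument you should either justify the $\TC$ covering inequality for the specific cover you use, or drop the case division and run your main argument with Lemma~\ref{l:6} in its local-coefficient form.
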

\begin{proof}
Suppose that there is a section $s_\sharp:M_\sharp\to \Delta^k_\sharp$. We need to show that there is a section $s_1:M_1^2\to\Delta^k=\Delta^k_{M_1}$.
Let $D\subset M_1$ be the $n$-ball whose interior is cut off under the connected sum operation and let $\stackrel\circ{M}_1=M_1\setminus Int D$.
Let $f:M_1\sharp M_2\to M_1$ be a map which is the identity on $\stackrel\circ{M}_1$ and which sends the complement to $\overset\circ{M}_1$ to $D$.
The map $f$ induces a map of fibrations $\bar f:\Delta^k_\sharp\to \Delta^k$. Then $$s'=\bar f\circ s_\sharp\circ((f\times f)|_{\stackrel\circ{M}_1\times\stackrel\circ{M}_1})^{-1}:\stackrel\circ{M}_1\times\stackrel\circ{M}_1\to\Delta^k$$ is a section over $\stackrel\circ{M}_1\times\stackrel\circ{M}_1$.
We extend the section $s'$ to $M_1\times M_1$ by induction. 

Assume that the extension is already constructed on the $(m-1)$-skeleton of $M_1\times M_1$.
Let $\kappa\in H^m(M_1\times M_1,\stackrel\circ{M}_1\times\stackrel\circ{M}_1;G)$ be the obstruction for extension it to the $m$-skeleton.
Here we use the assumption that $M_1$ is simply connected to have the obstruction in cohomology group with constant coefficients.
Then the image $j^*(\kappa)\in H^m(M_1\times M_1;G)$ is the obstruction to the existence of a section over the $m$-skeleton of $M_1\times M_2$
(with a freedom to change it on the
$(m-1)$-skeleton). Since this  obstruction is natural, $(f\times f)^*(j^*(\kappa))$ is the obstruction to a lift of the map $f\times f$. Since the section $s_\sharp$ defines such a lift,
it follows that $(f\times f)^*(j^*(\kappa))=0$. Since $f\times f$ is a map of degree one, $(f\times f)^*$ is injective (see Lemma~\ref{l:6}). Hence $j^*(\kappa)=0$.
By Corollary~\ref{forgetting}, $\kappa=0$.
\end{proof}

\section{Proof of Theorem~\ref{th:0.3}}\label{s:4}

Recall that we denote the connected sum $M_1\sharp M_2$ by $M_\sharp$ and the pointed sum $M_1\vee M_2$ by $M_\vee$. There exists a fibration 
$p_\#:\Delta^k_\sharp\to M_\sharp^2$ such that $\TC(M_\sharp)\le k$ if and only if the fibration $\Delta^k_\sharp$ has a section. Similarly there is a fibration $p_\vee:\Delta^k_\vee\to M_\vee^2$ similarly related to $\TC(M_\vee)\le k$. The fibers of the fibrations $\Delta^k_\sharp$ and $\Delta^k_\vee$ are the joins $*^{k+1}\Omega(M_\sharp)$ and $*^{k+1}\Omega(M_\vee)$ of $(k+1)$ copies of loop spaces respectively. We will denote the inclusion $M_\sharp\to M_\vee$ by $j$. It induces a map $j_\Delta$ of the total spaces of fibrations $\Delta^k_\sharp\to \Delta^k_\vee$ and the map $j_\Delta'$ of the fibers of the fibrations. 

\begin{lemma}\label{l:12}  Under conditions of Theorem~\ref{th:0.3} the map $j_\Delta'$ is a $2n$-equivalence. 
\end{lemma}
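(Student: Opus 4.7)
The map $j_\Delta'$ is the iterated join map $*^{k+1}(\Omega j)\co *^{k+1}\Omega M_\sharp \to *^{k+1}\Omega M_\vee$ induced by the loop map $\Omega j$: the diagonal $X\to X\times X$ has homotopy fiber $\Omega X$, so its fibrational replacement $\Delta^0_X$ has fiber $\Omega X$, and hence the fiberwise join $\Delta^k_X=*^{k+1}_{X\times X}\Delta^0_X$ has fiber $*^{k+1}\Omega X$. Accordingly, the plan is to obtain the desired $2n$-equivalence by a single application of Proposition~\ref{p:2} to $\Omega j$, after verifying its hypotheses and checking the resulting numerical bound.

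By Proposition~\ref{p:1}, $j\co M_\sharp\to M_\vee$ is an $(n-1)$-equivalence; looping this once yields that $\Omega j$ is an $(n-2)$-equivalence of pointed CW complexes. Since $M_1$ and $M_2$ are $r$-connected, both $M_\sharp$ and $M_\vee$ are $r$-connected, and consequently $\Omega M_\sharp$ and $\Omega M_\vee$ are $(r-1)$-connected. Feeding these data into Proposition~\ref{p:2} (with ``$n$'' there equal to our $n$ and ``$r$'' equal to our $r$) then shows that $*^{k+1}(\Omega j)$ is a $(kr+k+n-2)$-equivalence.

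It remains to confirm $kr+k+n-2\ge 2n$, i.e.\ $k(r+1)\ge n+2$. The lemma is to be used with $k=\TC M_\vee$ in the proof of Theorem~\ref{th:0.3}, and the inequality $\TC(M_1\vee M_2)\ge \max\{\TC M_1,\TC M_2\}$ recalled in the introduction, combined with the standing hypothesis $\TC M_i\ge (n+2)/(r+1)$, gives $k\ge (n+2)/(r+1)$ as required. The only ``obstacle'' is this bookkeeping: matching the $(kr+k+n-2)$ output of Proposition~\ref{p:2} against the threshold $(n+2)/(r+1)$ prescribed by Theorem~\ref{th:0.3} is exactly what the stated hypothesis on $\TC M_i$ is engineered to achieve, so no further topological input is needed beyond Propositions~\ref{p:1} and~\ref{p:2}.
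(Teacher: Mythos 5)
Your proof is correct and follows the same route as the paper: apply Proposition~\ref{p:1} to get that $\Omega j$ is an $(n-2)$-equivalence, feed this into Proposition~\ref{p:2} to get a $(kr+k+n-2)$-equivalence of the joined fibers, and use the hypothesis $\TC M_i\ge (n+2)/(r+1)$ to see $k(r+1)\ge n+2$, hence $kr+k+n-2\ge 2n$. You actually spell out the final numerical bookkeeping (via $k\ge \TC M_\vee\ge\max\{\TC M_1,\TC M_2\}$) more explicitly than the paper does.
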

\begin{proof}
By Proposition~\ref{p:1}, the map $\Omega(M_\sharp)\to \Omega(M_\vee)$ is an $(n-2)$-equivalence. 
Hence, by Proposition~\ref{p:2}, the map $j_\Delta'$ is $(kr+k+n-2)$-equivalence, which under hypotheses of Theorem~\ref{th:0.3} implies that $j_\Delta'$ is a $2n$-equivalence. 
\end{proof}

%In view of Lemma~\ref{l:12}, we omit the proof of Proposition~\ref{l:2.1} below as it is similar to that of Lemma~\ref{l:1.1}. 

\begin{proposition}\label{l:2.1} Under the assumption of Theorem~\ref{th:0.3}, given a section $s_\vee$ of the fibration $\Delta^k_\vee$,  the fibration $\Delta^k_\sharp$ also admits a section. In particular, $\TC M_\sharp\le \TC M_\vee$.
\end{proposition}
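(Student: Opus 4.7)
The plan is to mimic exactly the template of the proof of Lemma~\ref{l:1.1}, with the Ganea fibrations replaced by the fibrations $\Delta^k$ and the dimension bound $n$ replaced by $2n=\dim(M_\sharp^2)$. First, I would form the pullback square
\[
\begin{CD}
\Delta^k_\sharp @>h>> E @>j'>> \Delta^k_\vee\\
@. @Vp'VV @Vp_\vee VV\\
@. M_\sharp^2 @>j\times j>> M_\vee^2\\
\end{CD}
\]
in which $E$ is the pullback of $p_\vee$ along $j\times j$, and $h$ is the canonical map into the pullback induced by the compatible pair $p_\sharp\colon \Delta^k_\sharp\to M_\sharp^2$ and $j_\Delta\colon \Delta^k_\sharp\to \Delta^k_\vee$. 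The given section $s_\vee$ immediately pulls back to a section $s'\colon M_\sharp^2\to E$ of $p'$.

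The second step is to upgrade the fiberwise equivalence to a total space equivalence, namely to show that $h$ is a $2n$-equivalence. Both $p'$ and $p_\sharp$ are fibrations over the common base $M_\sharp^2$ with fibers $*^{k+1}\Omega M_\vee$ and $*^{k+1}\Omega M_\sharp$, and $h$ restricts on fibers to $j_\Delta'$, which is a $2n$-equivalence by Lemma~\ref{l:12}. The Five Lemma applied to the ladder of long exact sequences of homotopy groups of the two fibrations then gives that $h$ itself is a $2n$-equivalence.

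The third step is an obstruction-theoretic lift of $s'$ through $h$. Because $r\ge 1$, the space $M_\sharp^2$ is simply connected and of dimension $2n$; the homotopy fiber of $h$ is $(2n-1)$-connected, so obstructions to lifting $s'$ lie in $H^i(M_\sharp^2;\pi_{i-1}(\mathrm{hofib}\,h))$ with $i\le 2n$ and vanish for degree reasons. A homotopy lift $s''$ then satisfies $p_\sharp\circ s''=p'\circ h\circ s''\simeq p'\circ s'=\id$, so applying the homotopy lifting property of the fibration $p_\sharp$ deforms $s''$ into a strict section. Applied with $k=\TC M_\vee$, this yields $\TC M_\sharp\le\TC M_\vee$.

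The main technical obstacle is Step~2, the passage from a fiberwise connectivity estimate to one on total spaces: the whole argument is driven by the need that $h$ be a $2n$-equivalence, and this is exactly why the quantitative hypothesis $\TC M_i\ge\tfrac{n+2}{r+1}$ appears in Theorem~\ref{th:0.3} — it is the input that makes $j_\Delta'$ a $2n$-equivalence via Lemma~\ref{l:12}, and hence makes obstruction theory over a $2n$-dimensional base complete the argument in direct analogy with Lemma~\ref{l:1.1}.
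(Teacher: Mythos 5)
Your proposal is correct and follows essentially the same route as the paper: the same pullback square, the same use of Lemma~\ref{l:12} plus the Five Lemma on the homotopy ladders of the two fibrations over $M_\sharp^2$ to conclude that $h$ is a $2n$-equivalence, and then a lift of $s'$ through $h$ over the $2n$-dimensional base. Your extra remarks on the obstruction groups and on straightening the homotopy lift into a strict section via the homotopy lifting property only make explicit what the paper leaves implicit.
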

\begin{proof}  Since manifolds with $\TC=1$ are spheres, the case $k=1$ is vacuous and we may assume in what follows that $k\ge 2$. 
We consider the pull-back diagram
\[
\begin{CD}
\Delta^k_\sharp @>h>> E @>j'>> G^k_\vee\\
@. @Vp'VV @Vp_\vee VV\\
@. M_\sharp^2 @>j>> M_\vee^2\\
\end{CD}
\]
where $p'\circ h=p_\sharp$ and $j'\circ h=j_\Delta$.  The section $s_\vee$ defines a section $s':M_\sharp^2\to E$. Let $F_\sharp$ and $F_\vee$ denote
the fibers of the fibrations $\Delta^k_\#$ and $\Delta_\vee$ respectively. Lemma~\ref{l:12} and the Five Lemma applied to the diagram
\[
\begin{CD}
\pi_{i+1}(M^2_\sharp) @>>>\pi_{i}(F_\sharp) @>>>\pi_i(\Delta^k_\#) @>>>\pi_i(M_\#^2) @>>>\pi_{i-1}(F_\sharp)\\
@V=VV @V(j'_\Delta)_*VV @Vh_*VV @V=VV @V(j'_\Delta)_*VV\\
\pi_{i+1}(M^2_\sharp) @>>>\pi_{i}(F_\vee) @>>>\pi_i(E) @>>>\pi_i(M_\sharp^2) @>>>\pi_{i-1}(F_\vee)\\
\end{CD}
\]
imply that $h$ is a $2n$-equivalence. Therefore the map $s'$ admits a lift $s''$ with respect to $h$. Then $s''$ is a section of $\Delta^k_\sharp$.
\end{proof}

\section{Examples}\label{s:5}

The idea of reducing the computation of $\cat$ and $\TC$ invariants of manifolds to the case of prime manifolds has been used by several authors in the past. In this section we list examples of computations in low dimensional cases.

\subsection*{The category of surfaces} Except for $S^2$, every closed surface is a connected sum of a finite number of tori  $T$ and a finite number of $\R P^2$. By the cup-length estimate $\cat(T)=\cat(\R P^2)=2$. By Theorem~\ref{th:1}, then, the Lusternik-Schnirelmann category of any closed surface except for $S^2$ is $2$, which is a well-known result.

\subsection*{The topological complexity of surfaces} Since the torus $T$ is a topological group, $\TC(T)=2$. 
Topological complexity of surfaces of genus $>1$ is $4$~\cite{Fa}. Since $\R P^2$ admits an immersion into $\R^3$, the topological complexity of $\R P^2$ is $3$, see \cite{FTY}.  The topological complexity of all other non-orientable surfaces is 4~\cite{CoV} (for surfaces of genus $> 3$ see~\cite{Dr}).

\subsection*{The category of $3$-manifolds}  The Lusternik-Schnirelmann category of $3$-manifolds is computed in \cite{GG}. The authors essentially used a decomposition of a $3$-manifold into a connected sum of prime manifolds.  

\subsection*{The category of simply connected $4$-manifolds} 
The category of simply connected 4-complexes does not exceed 4. Then the cup-length estimate yields $\cat(\C P^2)=2=\cat(-\C P^2)$ and $\cat(S^2\times S^2)=2$. Every smooth closed simply connected $4$-manifold is homotopy equivalent to the connected sum of a manifold with one of the three mentioned manifolds.  Consequently for every smooth closed simply connected $4$-manifold except for $S^4$, the category is $2$. 

\subsection*{The topological complexity of simply connected $4$-manifolds}  As in the case of the Lusternik-Schnirelmann category, the simple connectedness and the cup-length estimate implies that the topological complexity of $\C P^2, -\C P^2$ and $S^2\times S^2$ is $4$. Consequently for every smooth closed simply connected $4$-manifold except for $S^4$, the topological complexity is $4$.

\end{document}